\subjclass[2010]{12F10; 16T05}
\keywords{Hopf-Galois structures; Field extensions;	Groups of squarefree order.}
\title[HGS on degree $pq$ separable extensions]{Hopf-Galois structures on separable field extensions of degree $pq$}
\author{Andrew Darlington}
\date{\today}
\address{Department of Mathematics and Statistics, Faculty of Environment, Science and Economy, University of Exeter, Exeter EX4 QFU. UK.}
\email{ad788@exeter.ac.uk}
\def\bign#1{\mathclose{\hbox{$\left#1\vbox to8.5\p@{}\right.\n@space$}}\mathopen{}}
\def\Bign#1{\mathclose{\hbox{$\left#1\vbox to11.5\p@{}\right.\n@space$}}\mathopen{}}
\newtheorem{theorem}{Theorem}[section]
\newtheorem{proposition}[theorem]{Proposition}
\newtheorem{lemma}[theorem]{Lemma}
\newtheorem{remark}[theorem]{Remark}
\newtheorem{corollary}[theorem]{Corollary}
\newcommand{\e}{\mathbf{e}}
\newcommand{\Gal}{\mathrm{Gal}}
\newcommand{\Hol}{\mathrm{Hol}}
\newcommand{\Aut}{\mathrm{Aut}}
\newcommand{\cblack}{\color{black}}
\newcolumntype{M}[1]{>{\centering\arraybackslash}m{#1}}
\newcolumntype{N}{@{}m{0pt}@{}}
\begin{document}
	\maketitle
	\bibliographystyle{amsalpha}

\begin{abstract}
	In 2020, Alabdali and Byott described the Hopf-Galois structures arising on Galois field extensions of squarefree degree. Extending to squarefree separable, but not necessarily normal, extensions $L/K$ is a natural next step. One must consider now the interplay between two Galois groups $G=\Gal(E/K)$ and $G'=\Gal(E/L)$, where $E$ is the Galois closure of $L/K$. In this paper, we give a characterisation and enumeration of the Hopf-Galois structures arising on separable extensions of degree $pq$ where $p$ and $q$ are distinct odd primes. This work includes the results of Byott and Martin-Lyons who do likewise for the special case that $p=2q+1$.
\end{abstract}

\section{Introduction}\label{intro}
The concepts of Hopf-Galois theory were made explicit by Chase and Sweedler \cite{CS69} in 1969. Nearly twenty years later, Greither and Pareigis showed in \cite{GP87} that the problem of finding Hopf-Galois structures on separable field extensions could be phrased and approached purely in group-theoretic terms. In short, for a given separable extension $L/K$ of fields with Galois closure $E/K$, let $G=\Gal(E/K)$, $G'=\Gal(E/L)$, let $X=G/G'$ be the set of left cosets of $G'$ in $G$, and let $\lambda: G \rightarrow \text{Perm}(X)$ by the left translation map, where $\lambda(g)(\overline{h})=\overline{gh}$. Then each Hopf-Galois structure on $L/K$ corresponds to a regular subgroup $N$ of $\mathrm{Perm}(X)$ normalised by $\lambda(G)$. The associated Hopf algebra is given by $H=E[N]^G$, with some determined $G$-action on $E$ and $N$. The isomorphism type of $N$ is then referred to as the `type' of the Hopf-Galois structure.

Suppose $L/K$ is Galois with group $G$. Then the group (Hopf) algebra $H=K[G]$ endows $L/K$ with a Hopf-Galois structure of type $G$. If, further, $G$ is non-abelian, and $\rho: G \rightarrow \text{Perm}(G)$ is the right translation map given by $\rho(g)(h)=hg^{-1}$, then the Hopf algebras $L[\lambda(G)]^{\lambda(G)}$ and $L[{\rho(G)}]^{\lambda(G)}$ are not equal, and give distinct Hopf-Galois structures on $L/K$. See Corollary $6.11$ of \cite{Chi00} for more details.

Since \cite{GP87}, the study of Hopf-Galois structures has primarily focused on Galois extensions. In recent years, links have been discovered between Hopf-Galois structures on Galois extensions and other algebraic constructions such as skew braces (see \cite{Bac16} for braces and \cite{SV17} for skew braces), $1$-cocycles (e.g. sections $9.1$ and $9.2$ in \cite{Chi+20}), and more. How these structures arise and behave in connection to these other objects is its own very interesting question.

However, studying Hopf-Galois structures is interesting in its own right. This is evident, for example, in the study of Galois module theory, where one can now talk about the module structure of the ring of integers of a separable (but not necessarily normal) extension $L/K$ (see \cite{GM23}, for example). Another area of interest is that of the Galois correspondence. In the classical case, this gives a bijective correspondence between subgroups of the Galois group of a Galois extension $L/K$ and intermediate fields $K \leq F \leq L$. Hopf-Galois theory allows one to extend this notion, and arrive at the \textit{Hopf-Galois correspondence}. For a given Hopf-Galois structure on a field extension $L/K$ with Hopf algebra $H$, there is a correspondence from sub-Hopf algebras of $H$ and intermediate field extensions $K \leq F \leq L$. In contrast to the classical case, although this correspondence is always injective, it is not always surjective.

The study of the Hopf-Galois theory for separable but not necessarily non-normal extensions is the focus of this paper.

Many other papers have been published in this area. For $p$ a prime, \cite{CS20}, for example gives the classification of Hopf-Galois structures on separable field extensions of degree $2p$ and $p^2$; \cite{Kohl98} looks at order $p^n$ radical extensions; and, more generally, \cite{CRV16} studies this extended notion of the Galois correspondence for separable extensions in more detail. Recently, the connection between Hopf-Galois structures on Galois extensions of fields and skew braces has been extended to a connection between Hopf-Galois structures on non-normal, separable extensions of fields and skew bracoids (see \cite{MLT23}), giving an additional dimension of interest in the classification results in this paper.


In 2022, Byott and Martin-Lyons in \cite{BML21} looked at separable extensions of degree $pq$ with $p,q$ odd primes and $p=2q+1$. Such a $q$ is called a Sophie Germain prime, with $p$ being the associated `safe' prime. The paper \cite{BML21} is a first step to extending the squarefree results of \cite{AB20} to separable extensions, and also retrieves the result of \cite{Byo04} in the case where the extension is Galois. In this situation, $N$ (as above) has order $pq$, but $G=\Gal(E/K)$ may instead be a much larger group (potentially of non-squarefree order), with $pq \mid |G|$. The problem now is that no classification of such groups currently exists.

The idea presented in \cite{BML21}, which will be followed in this paper, is thus to instead consider each abstract group $N$ of order $pq$ in turn, and compute the transitive subgroups $G$ of $\Hol(N)$. Let $N_1,N_2$ be two (not necessarily distinct) abstract groups of order $pq$ with $G_1$ a transitive subgroup of $\Hol(N_1)$ and $G_2$ a transitive subgroup of $\Hol(N_2)$. This then gives the task of deciding when $G_1$ and $G_2$ are isomorphic as \emph{permutation} groups. That is, that there exists an isomorphism $\phi$ between them with the additional property that $\phi(\text{Stab}_{G_1}(1_{N_1}))=\text{Stab}_{G_2}(1_{N_2})$. If this is the case, then $G_1 \cong G_2$ corresponds to some field extension $L/K$ which will admit Hopf-Galois structures of both types $N_1$ and $N_2$.

In this paper, we show that this approach is feasible in the more general case where $p$ and $q$ are now arbitrary distinct odd primes, and we obtain a complete classification in such cases. With the results of \cite{CS20} for $p^2$ and $2p$, this paper completes the classification of Hopf-Galois structures on separable extensions whose degree is a product of two primes. Our results specialise to those in \cite{BML21} -- see remarks \ref{cyclic_BML}, \ref{metab_BML} and \ref{both_BML} that will appear later in Section \ref{general_pq} below.

\section{Preliminaries}\label{prelim}
 Let $L/K$ be a field extension and let $H$ be a $K$-Hopf algebra acting on $L$ with action $\cdot$. We say that $L$ becomes an $H$-module algebra if $\Delta(h) \cdot (x \otimes y) = \sum_{(h)}(h_{(1)} \cdot x) \otimes (h_{(2)} \cdot y)$ for all $h \in H$ and $x,y \in L$. Here $\Delta (h)=\sum_{(h)}h_{(1)}\otimes h_{(2)}$ is written in Sweedler notation. In such a situation, we say that $H$ gives a \textit{Hopf-Galois structure} on $L/K$ if the $K$-linear map $\theta: L \otimes H \rightarrow \text{End}(L)$ given by $\theta (x \otimes h)(y)=x(h \cdot y)$ is bijective. If $L/K$ is separable, recall that we denote by $E$ its Galois closure, by $G$ the Galois group of $E/K$, by $G'$ the Galois group of $E/L$, and by $X$ the left coset space of $G'$ in $G$. We say that $L/K$ is \textit{almost classically Galois} if $G'$ has a normal complement $C$ in $G$. Equivalently, there is a regular subgroup $C$ in $\text{Perm}(X)$ normalised by $G$ and contained in $G$. This class of extensions is particularly well-understood and they provide a family for which the Hopf-Galois correspondence is bijective, see Theorem 5.2 of \cite{GP87}.
 
 Take $L/K$ to be separable, with $E,G,G'$ and $X$ as defined above. We may identify the group $G$ as a subgroup of $\text{Perm}(X)$ via its image in the left-translation map $\lambda:G \rightarrow \text{Perm}(X)$, $\lambda(g)(hG')=(gh)G'$. This allows us to realise $G$ as a (transitive) permutation group of degree $[L:K]$. We see that $G$ acts transitively on the $K$-linear embeddings of $L$ into $E$, and the stabiliser of the inclusion $L \hookrightarrow E$ is $G'$.

Subsequently, a new method of finding Hopf-Galois structures was made explicit by Byott in \cite{Byo96}. Instead of looking at regular embeddings of groups in $\text{Perm}(X)$ as above, one may fix an abstract group $N$ of order $n=|L:K|$ and look at transitive subgroups $G$ of $\Hol(N) \cong N \rtimes \Aut(N)$, the \emph{holomorph} of $N$. This has the advantage of being less computationally expensive than searching directly within $\text{Perm}(X)$ as $\Hol(N)$ is, in general, a much smaller group. The paper also gives a counting formula we can use:
\begin{lemma}[\cite{Byo96}]\label{Byott_num_HGS}
	Let $G,G',N$ be as above, let $e(G,N)$ be the number of Hopf-Galois structures of type $N$ which realise $G$, and $e'(G,N)$ the number of subgroups $M$ of $\emph{Hol}(N)$ which are transitive on $N$ and isomorphic to $G$ via an isomorphism taking the stabiliser $M'$ of $1_N$ in $M$ to $G'$. Then
	\[e(G,N)=\frac{|\emph{Aut}(G,G')|}{|\emph{Aut}(N)|}e'(G,N),\]
	where
	\[\Aut(G,G')=\left\{\theta\in\Aut(G) \mid \theta(G')=G'\right\},\]
	the group of automorphisms $\theta$ of $G$ such that $\theta$ fixes the identity coset $1_GG'$ of $X=G/G'$.
\end{lemma}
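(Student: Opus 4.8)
The plan is to count, on both sides of the asserted identity, not subgroups but the \emph{embeddings} producing them, and then match the two sets of embeddings by a bijection transporting everything along a base-point-preserving identification of $X$ with $N$. By the Greither--Pareigis dictionary recalled above, $e(G,N)$ is the number of regular subgroups $N^{*}\le\mathrm{Perm}(X)$ with $N^{*}\cong N$ normalised by $\lambda(G)$; since two injective homomorphisms $N\to\mathrm{Perm}(X)$ with regular image have the same image exactly when they differ by precomposition with an element of $\Aut(N)$, and $\Aut(N)$ acts freely on the set of all such embeddings, one gets
\[
e(G,N)=\frac{\#\mathcal{A}}{|\Aut(N)|},\qquad
\mathcal{A}=\bigl\{\,\nu\colon N\hookrightarrow\mathrm{Perm}(X)\ \text{regular},\ \lambda(G)\ \text{normalises}\ \nu(N)\,\bigr\}.
\]
Dually, for a subgroup $M\le\Hol(N)$ that is transitive and of the type named in the statement, the isomorphisms $G\to M$ carrying $G'$ onto $M'=\mathrm{Stab}_{M}(1_{N})$ form a torsor under $\Aut(G,G')$, so
\[
e'(G,N)=\frac{\#\mathcal{B}}{|\Aut(G,G')|},\qquad
\mathcal{B}=\bigl\{\,\mu\colon G\hookrightarrow\Hol(N)\ :\ \mu(G)\ \text{transitive},\ \mathrm{Stab}_{\mu(G)}(1_{N})=\mu(G')\,\bigr\}.
\]
Here I would also record that $\lambda$ is faithful, because $G'=\Gal(E/L)$ contains no nontrivial normal subgroup of $G=\Gal(E/K)$ ($E$ being the Galois closure of $L/K$), so the hypothesis ``$\mu$ injective'' in the description of $\mathcal{B}$ is automatic.

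The heart of the argument is a bijection $\mathcal{A}\leftrightarrow\mathcal{B}$. Write $x_{0}=1_{G}G'$ and let $\lambda_{N}\colon N\to\mathrm{Perm}(N)$ be the left regular representation, so that $\Hol(N)=N_{\mathrm{Perm}(N)}(\lambda_{N}(N))$. Given $\nu\in\mathcal{A}$, regularity of $\nu(N)$ lets me define a bijection $c_{\nu}\colon X\to N$ sending $x$ to the unique $n\in N$ with $\nu(n)(x_{0})=x$; one checks that $c_{\nu}(x_{0})=1_{N}$ and $c_{\nu}\,\nu(n)\,c_{\nu}^{-1}=\lambda_{N}(n)$. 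Setting $\mu_{\nu}(g)=c_{\nu}\,\lambda(g)\,c_{\nu}^{-1}$, the fact that $\lambda(G)$ normalises $\nu(N)$ forces $\mu_{\nu}(G)$ to normalise $\lambda_{N}(N)$ and hence to lie in $\Hol(N)$; moreover $\mu_{\nu}(G)$ is transitive and $\mathrm{Stab}_{\mu_{\nu}(G)}(1_{N})=\mu_{\nu}(\mathrm{Stab}_{G}(x_{0}))=\mu_{\nu}(G')$, so $\mu_{\nu}\in\mathcal{B}$. Conversely, given $\mu\in\mathcal{B}$, define $d_{\mu}\colon X\to N$ by $d_{\mu}(gG')=\mu(g)(1_{N})$; this is well defined and bijective precisely because $\mathrm{Stab}_{\mu(G)}(1_{N})=\mu(G')$, and it intertwines $\lambda(g)$ with $\mu(g)$. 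Then $\nu_{\mu}(n)=d_{\mu}^{-1}\,\lambda_{N}(n)\,d_{\mu}$ is a regular embedding $N\hookrightarrow\mathrm{Perm}(X)$ whose image is normalised by $\lambda(G)$, since each $\mu(g)\in\Hol(N)$ normalises $\lambda_{N}(N)$; so $\nu_{\mu}\in\mathcal{A}$. A short computation shows $c_{\nu_{\mu}}=d_{\mu}$ and $d_{\mu_{\nu}}=c_{\nu}$, so the two assignments $\nu\mapsto\mu_{\nu}$ and $\mu\mapsto\nu_{\mu}$ are mutually inverse, giving $\#\mathcal{A}=\#\mathcal{B}$.

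Combining the three displays gives $e(G,N)=\#\mathcal{A}/|\Aut(N)|=\#\mathcal{B}/|\Aut(N)|=\bigl(|\Aut(G,G')|/|\Aut(N)|\bigr)\,e'(G,N)$, as required. I expect the genuinely delicate point to be the bookkeeping with stabilisers: at every stage one must check that ``the stabiliser of the base point equals the image of $G'$'' is preserved, since this is exactly what makes $d_{\mu}$ a bijection and is the feature that encodes, on the $\Hol(N)$ side, the possible non-normality of $L/K$. Everything else is a formal transport of structure along the base-point-preserving bijections between $X$ and $N$.
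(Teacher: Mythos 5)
Your argument is correct and is essentially the standard proof from \cite{Byo96}, which the paper cites without reproving: count base-point-compatible embeddings on both sides, note that they form torsors under $\Aut(N)$ and $\Aut(G,G')$ respectively, and transport one set to the other by conjugating with the bijection $X\to N$ determined by the base points, using that $\Hol(N)$ is the normaliser of the left regular representation. All the delicate points (faithfulness of $\lambda$ via triviality of the core of $G'$, well-definedness and bijectivity of $d_\mu$ from the stabiliser condition, and the mutual-inverse check) are handled correctly.
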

The following two propositions, recalled from \cite{BML21}, Propositions 2.2 and 2.3 respectively, will be useful for our later computations:
\begin{proposition}\label{ab-aut}
	Let $N$ be an abelian group such that $\Aut(N)$ is also abelian, and let
	$A$, $A'$ be subgroups of $\Aut(N)$. Consider the subgroups $M=N
	\rtimes A$ and $M'=N \rtimes A'$ of $\Hol(N)$. If
	there is an isomorphism $\phi: M \to M'$ with
	$\phi(N)=N$, then $M=M'$.
\end{proposition}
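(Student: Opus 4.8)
The plan is to show that the subgroup $A \leq \Aut(N)$ is determined intrinsically by the pair $(M,N)$ — namely as the image of the conjugation action of $M$ on $N$ — and then to exploit the hypothesis that $\Aut(N)$ is abelian to conclude that $A$ and $A'$ must coincide.

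First I would record the multiplication in $\Hol(N)=N\rtimes\Aut(N)$, namely $(n,\alpha)(m,\beta)=(n\,\alpha(m),\alpha\beta)$, and note that $N$ is normal in both $M=N\rtimes A$ and $M'=N\rtimes A'$. Conjugation then gives homomorphisms $c\colon M\to\Aut(N)$ and $c'\colon M'\to\Aut(N)$. A one-line computation — using crucially that $N$ is abelian, so that $n\,\alpha(m)\,n^{-1}=\alpha(m)$ — shows $c_{(n,\alpha)}=\alpha$ for every $(n,\alpha)\in M$; hence the image $c(M)$ is exactly $A$, and likewise $c'(M')=A'$.

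Next, put $\psi=\phi|_{N}$, which lies in $\Aut(N)$ since $\phi(N)=N$. For $x\in M$ and $n\in N$ we have $xnx^{-1}\in N$, so applying $\phi$ and using $\phi|_{N}=\psi$ gives $\phi(x)\,\psi(n)\,\phi(x)^{-1}=\psi(xnx^{-1})$, i.e. $c'_{\phi(x)}=\psi\circ c_{x}\circ\psi^{-1}$ in $\Aut(N)$. As $x$ runs over $M$ the element $\phi(x)$ runs over all of $M'$ (since $\phi$ is onto), so the left-hand sides run over $c'(M')=A'$ while the right-hand sides run over $\psi A\psi^{-1}$; therefore $A'=\psi A\psi^{-1}$.

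Finally, since $\Aut(N)$ is abelian, $\psi\theta\psi^{-1}=\theta$ for every $\theta\in\Aut(N)$, so $\psi A\psi^{-1}=A$ and hence $A'=A$. Then $M=N\rtimes A=N\rtimes A'=M'$ as subgroups of $\Hol(N)$, which is what we wanted. There is no serious obstacle here; the only thing requiring a little care is the bookkeeping with the semidirect-product multiplication and inverses in $\Hol(N)$ needed to pin down $c_{(n,\alpha)}=\alpha$. The whole force of the statement comes from the fact that commutativity of $\Aut(N)$ collapses the conjugacy class $\{\psi A\psi^{-1}:\psi\in\Aut(N)\}$ to the single subgroup $A$ itself.
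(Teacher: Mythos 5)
Your argument is correct: since $N$ is abelian, conjugation by $[\eta,\alpha]$ acts on $N$ exactly as $\alpha$, so $A$ is recovered as the image of $M$ in $\Aut(N)$, and commutativity of $\Aut(N)$ kills the conjugation by $\psi=\phi|_N$, forcing $A'=\psi A\psi^{-1}=A$ and hence $M=M'$. This is essentially the same proof as in the cited source (the paper itself only recalls the statement from \cite{BML21} without reproving it), so there is nothing to add.
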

\begin{proposition}\label{rel-aut-char}
	Let $N$ be any group and $A$ a subgroup of $\Aut(N)$. Let $M$ be the
	subgroup $N \rtimes A$ of $Hol(N)$, with $N$ a characteristic subgroup of
	$M$. Then the group
	$$ \Aut(M,A) := \{ \theta \in \Aut(M) : \theta(A)=A\}  $$ 
	is isomorphic to the normaliser of $A$ in
	$\Aut(N)$. In particular, if $\Aut(N)$ is abelian then $\Aut(M,A)
	\cong \Aut(N)$.
\end{proposition}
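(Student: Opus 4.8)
The plan is to exhibit mutually inverse group homomorphisms between $\Aut(M,A)$ and $N_{\Aut(N)}(A)$, the normaliser of $A$ in $\Aut(N)$. Throughout I identify $\Hol(N)$ with $N\rtimes\Aut(N)$, writing elements as pairs $(n,\alpha)$ with $(n_1,\alpha_1)(n_2,\alpha_2)=(n_1\alpha_1(n_2),\alpha_1\alpha_2)$, so that $N=\{(n,1)\}$, $\Aut(N)=\{(1,\alpha)\}$, $N\cap\Aut(N)=1$, and every element of $M$ is uniquely $(n,a)$ with $n\in N$, $a\in A$; in particular $M$ is generated by its subgroups $N$ and $A$.

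For the forward map, take $\theta\in\Aut(M,A)$. Since $N$ is characteristic in $M$, we have $\theta(N)=N$, so $\theta$ restricts to $\theta|_N\in\Aut(N)$; likewise $\theta(A)=A$ gives $\theta|_A\in\Aut(A)$. The key step is to apply $\theta$ to the conjugation relation $(1,a)(n,1)(1,a)^{-1}=(a(n),1)$ and compare the two sides: this yields $\theta|_N\circ a=\theta|_A(a)\circ\theta|_N$ in $\Aut(N)$, i.e. $\theta|_A(a)=\theta|_N\,a\,\theta|_N^{-1}$ for every $a\in A$. Since $\theta|_A(a)\in A$ and $a\mapsto\theta|_N a\,\theta|_N^{-1}$ is a bijection of $A$ onto its image, we get $\theta|_N A\,\theta|_N^{-1}=A$, so $\theta|_N\in N_{\Aut(N)}(A)$. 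The assignment $\theta\mapsto\theta|_N$ is a homomorphism because any $\theta_2\in\Aut(M,A)$ maps $N$ to $N$, so restrictions compose correctly.

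For the reverse map, given $\psi\in N_{\Aut(N)}(A)$ let $c_\psi$ denote conjugation by $(1,\psi)$ inside $\Hol(N)$. One checks $c_\psi$ sends $(n,1)\mapsto(\psi(n),1)$ and $(1,a)\mapsto(1,\psi a\psi^{-1})$; as $\psi$ normalises $A$, the map $c_\psi$ preserves both $N$ and $A$, hence the subgroup $M=NA$, and restricts there to an element of $\Aut(M,A)$ with $c_\psi|_N=\psi$. Since conjugation satisfies $c_{\psi_1\psi_2}=c_{\psi_1}c_{\psi_2}$, this is a homomorphism. Finally I would verify the two maps are mutually inverse: $\psi\mapsto c_\psi\mapsto c_\psi|_N=\psi$ is immediate, and for $\theta\in\Aut(M,A)$ the automorphisms $c_{\theta|_N}$ and $\theta$ agree on $N$ by construction and on $A$ by the identity $\theta|_A(a)=\theta|_N a\,\theta|_N^{-1}$ noted above, hence agree on all of $M=NA$. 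This gives $\Aut(M,A)\cong N_{\Aut(N)}(A)$, and the final assertion is immediate since $N_{\Aut(N)}(A)=\Aut(N)$ whenever $\Aut(N)$ is abelian.

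I do not anticipate a serious obstacle: the only genuine content is the conjugation-compatibility identity, and the single hypothesis that must be used essentially is that $N$ is characteristic in $M$, which is exactly what makes $\theta|_N$ well defined so that the forward map exists at all. The points to handle with a little care are that $M$ equals the product $NA$ of its two subgroups (so that an automorphism preserving both is determined by its restrictions to them), and that $c_\psi$ really does stabilise $M$, for which one needs $\psi$ to normalise $A$ and not merely lie in $\Aut(N)$.
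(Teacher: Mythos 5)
Your proof is correct, and the only essential hypothesis --- that $N$ is characteristic in $M$, which is what makes $\theta|_N$ well defined --- is used exactly where it must be. The paper itself only recalls this statement from \cite{BML21} without reproducing a proof, and the argument there is essentially the one you give: the isomorphism is realised by restriction to $N$ in one direction and by conjugation by $[1_N,\psi]$ inside $\Hol(N)$ in the other, with the compatibility identity $\theta|_A(a)=\theta|_N\, a\, \theta|_N^{-1}$ doing all the work.
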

We denote an element of $\Hol(N)$ by $[\eta,\alpha]$ where $\eta \in N$, $\alpha \in \Aut(N)$; for two elements $[\eta,\alpha],[\mu,\beta]\in \Hol(N)$. Their product is
\[[\eta,\alpha][\mu,\beta]=[\eta\alpha(\mu),\alpha\beta].\]
We write $\eta$ for $[\eta,\text{id}_N]$ and $\alpha$ for $[1_N,\alpha]$. We also denote by $\mathbb{Z}_m$ the ring of integers modulo $m$.

\section{Hopf-Galois Structures for General $pq$}\label{general_pq}
We now consider Hopf-Galois structures on separable field extensions $L/K$ of degree $pq$ with $p>q$ arbitrary distinct odd primes. Byott in \cite{Byo96} showed that if $p \not\equiv 1 \mod q$, then $L/K$ admits a unique Hopf-Galois structure, and in that case such a structure is almost classically Galois of cyclic type. Thus, we may assume for the rest of the paper that $p \equiv 1 \bmod{q}$. We can then assume the following prime factorisations for $p-1$ and $q-1$ respectively:
\begin{align*}
	&p-1=q^{e_0}\ell_1^{e_1} \cdots \ell_m^{e_m},\\
	&q-1=\ell_1^{f_1} \cdots \ell_m^{f_m},
\end{align*}
where $e_0 > 0$, $e_i\geq$, $f_i \geq 0$, and $\text{max}\left\{e_i,f_i\right\}> 0$ for $1\leq i \leq m$. Thus we take into account the fact that $p-1$ and $q-1$ might share common factors, and that $q \mid p-1$.

There are two abstract groups $N$ of order $pq$, namely the cyclic group $C_{pq}$, and the metabelian group $C_p \rtimes C_q$ where $C_q$ acts on $C_p$ in some faithful way. We therefore split our work into looking at these two cases. In each case, we will compute all transitive subgroups $G$ of $\Hol(N)$. Thus $L/K$, corresponding to $G\cong \Gal(E/K)$ and $G' \cong \Gal(E/L) \cong \mathrm{Stab}_G(1_N)$, will admit a Hopf-Galois structure of the corresponding type.

\subsection{Cyclic case}
Let $N$ be the cyclic group of order $pq$. The idea in this section is to divide the discussion with respect to each prime appearing in the factorisations. We therefore work with the presentation
\[N = \langle \sigma,\tau |\sigma^p=\tau^q=1,\sigma\tau=\tau\sigma \rangle.\]
We thus have
\[\Aut(N) \cong \Aut(\langle \sigma \rangle) \times \Aut(\langle \tau \rangle)\]
where the factors are cyclic of order $p-1$ and $q-1$ respectively. Then $\Aut(N)$ is generated by the following elements:
	\begin{align*}
	&\alpha \in \Aut(\langle \sigma \rangle) \text{ such that ord}(\alpha)=q^{e_0},\\
	&\alpha_i \in \Aut(\langle \sigma \rangle) \text{ such that ord}(\alpha_i)=\ell_i^{e_i},\\
	&\beta_i \in \Aut(\langle \tau \rangle) \text{ such that ord}(\beta_i)=\ell_i^{f_i},
\end{align*}
where $1\leq i \leq m$. We therefore obtain the decomposition
\[\Aut(N) \cong \langle \alpha \rangle \times \langle \alpha_1,\beta_1 \rangle \times \cdots \times \langle \alpha_m,\beta_m \rangle,\]
where the factors have coprime orders. In particular, $\Aut(N)$ is abelian.

\begin{remark}\label{cyclic_subs}
	The subgroups of $\langle \alpha \rangle$ are of the form \[\left\langle \alpha^{q^{e_0-c}} \right\rangle\]
	for $0 \leq c \leq e_0$; there are $e_0+1$ of these.
\end{remark}
For the following proposition, we make a slight abuse of notation by referring to $\alpha_i$ and $\beta_i$ as simply $\alpha$ and $\beta$ respectively, to $\ell_i$ as $\ell$, and $e_i$ and $f_i$ as simply $e$ and $f$ respectively. This is to avoid messy notation.
\begin{proposition}\label{cyclic_subgroups}
	The subgroups of $\langle \alpha , \beta \rangle$ are as follows:\\
	\[\begin{array}{lll}
		\emph{(i)} \left\langle \alpha^{\ell^{e-s}},\beta^{\ell^{f-r_2}} \right\rangle, & 0 \leq s \leq e, 0 \leq r_2 \leq f,\\
		\emph{(ii)} \left\langle \alpha^{n\ell^{e-s}}\beta^{\ell^{f-r_1}} \right \rangle, &1 \leq s \leq e, 1 \leq r_1 \leq f,\\
		\emph{(iii)} \left\langle \alpha^{n\ell^{e-s}}\beta^{\ell^{f-r_1}},\beta^{\ell^{f-r_2}} \right \rangle, &1 \leq r_2 < r_1 \leq f, 1 \leq r_1-r_2 < s \leq e.
	\end{array}\]
In cases \emph{(ii)} and \emph{(iii)} we have $1 \leq n < \ell^{\min\{s,r_1\}}$ with $\ell \nmid n$.
\end{proposition}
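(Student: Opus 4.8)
The plan is to classify the subgroups $H$ of the abelian group $\langle\alpha,\beta\rangle\cong C_{\ell^e}\times C_{\ell^f}$ by combining the projections of $H$ onto the two cyclic factors with the ``overlap'' between them. Set $A=\langle\alpha\rangle$ and $B=\langle\beta\rangle$, and write $\pi_A,\pi_B$ for the coordinate projections. First I would record the three natural invariants attached to $H$: the projection $\pi_A(H)=\langle\alpha^{\ell^{e-s}}\rangle$ for some $0\le s\le e$, the intersection $H\cap B=\langle\beta^{\ell^{f-r_2}}\rangle$ for some $0\le r_2\le f$, and the ``pure $\alpha$'' part $H\cap A=\langle\alpha^{\ell^{e-u}}\rangle$ for some $0\le u\le s$. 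Since $H$ is contained in $A\times B$ and $B$ is normal (everything is abelian), these are well-defined cyclic subgroups, and the quotient $H/(H\cap B)\cong\pi_A(H)$ fixes the ``slope'' of $H$.

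Next I would argue that $H$ is determined by $\pi_A(H)$, $H\cap B$, and the choice of a generator for $\pi_A(H)$ together with a lift of it into $H$. Concretely, pick the generator $\alpha^{\ell^{e-s}}$ of $\pi_A(H)$; any element of $H$ mapping to it has the form $\alpha^{\ell^{e-s}}\beta^{k}$ for some exponent $k$, and modifying $k$ by $H\cap B$ does not change $H$. This gives the crude description $H=\langle\alpha^{\ell^{e-s}}\beta^{k},\ \beta^{\ell^{f-r_2}}\rangle$. The remaining work is to normalise $k$: writing $k=n\ell^{f-r_1}$ with $\ell\nmid n$ (or $k=0$), one checks that the order of $\alpha^{\ell^{e-s}}\beta^{k}$ forces $f-r_1\ge$ (something), that we may reduce $n$ modulo $\ell^{\min\{s,r_1\}}$ (because raising the generator to an $\ell$-power, or multiplying by an element of $H\cap B$, changes $n$ only modulo that bound), and that $r_1>r_2$ (otherwise $\beta^{k}\in H\cap B$ and we are back in case (i)). Sorting out when the $\beta^{\ell^{f-r_2}}$ generator is redundant — i.e.\ when it already lies in $\langle\alpha^{\ell^{e-s}}\beta^{k}\rangle$ — splits the ``mixed'' subgroups into the cyclic family (ii) (where $r_2=0$, so no extra generator is needed, equivalently $H\cap B$ is trivial... more precisely the listed condition $1\le r_2\le f$ is absorbed) and the non-cyclic family (iii); this is where the inequalities $0<r_1-r_2<s\le e$ come from, since $\beta^{\ell^{f-r_2}}\in\langle\alpha^{\ell^{e-s}}\beta^{n\ell^{f-r_1}}\rangle$ precisely when the $\alpha$-component can be killed while the $\beta$-component still reaches $\beta^{\ell^{f-r_2}}$, which is a congruence condition relating $s$, $r_1$, $r_2$.

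Finally I would verify that the list is exhaustive and irredundant: every subgroup has invariants $(s,r_2,u)$ falling into exactly one of the three shapes according to whether $H=H\cap B$ (case (i) with $s=0$ wrapped in), $H\cap B$ trivial (case (ii)), or both nontrivial and distinct (case (iii)); and conversely each displayed subgroup genuinely has the asserted invariants, so no two entries coincide. I expect the main obstacle to be the bookkeeping in case (iii): pinning down the exact range $0<r_1-r_2<s\le e$ and the reduction $1\le n<\ell^{\min\{s,r_1\}}$, $\ell\nmid n$, requires carefully computing the order of $\alpha^{n\ell^{e-s}}\beta^{\ell^{f-r_1}}$ in $C_{\ell^e}\times C_{\ell^f}$ and tracking which modifications of $(n,r_1)$ leave the generated subgroup unchanged — a routine but error-prone exponent calculation, which I would do by reducing to the $\ell$-adic valuations of the relevant exponents.
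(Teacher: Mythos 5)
Your overall strategy is the same as the paper's: reduce an arbitrary subgroup $H$ of $C_{\ell^e}\times C_{\ell^f}$ to a two-generator normal form $\langle \alpha^{a}\beta^{b},\beta^{c}\rangle$ and then normalise the exponents. The paper gets this normal form by row-reducing the matrix of exponents of an arbitrary generating set; you get it from the short exact sequence $1\to H\cap B\to H\to\pi_A(H)\to 1$ by choosing a lift of a generator of $\pi_A(H)$ — these are the same computation in different clothing. Your normalisation steps are also essentially right: the lift exponent $k$ is well defined modulo $H\cap B$, the generator of a cyclic $H$ can be changed only by powers $m\equiv 1\pmod{\ell^{s}}$ if the $\alpha$-part is to stay normalised, which pins $n$ down modulo $\ell^{\min\{s,r_1\}}$, and the second generator $\beta^{\ell^{f-r_2}}$ is redundant exactly when $r_1-r_2\ge s$, which is where the inequality $0<r_1-r_2<s$ in case (iii) comes from. (The cosmetic difference that your unit $n$ sits on the $\beta$-exponent while the Proposition puts it on the $\alpha$-exponent is harmless: replacing the generator by its $m$-th power with $m\equiv n^{-1}\pmod{\ell^{r_1}}$ converts one form into the other.)

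The genuine problem is the trichotomy you propose in the last paragraph to prove exhaustiveness and irredundancy: ``case (i) iff $H=H\cap B$, case (ii) iff $H\cap B$ trivial, case (iii) iff both nontrivial and distinct.'' This is false in both directions. The group $\langle\alpha\rangle$ is of type (i) (take $s=e$, $r_2=0$) yet has $H\cap B=1$; the group $\langle\alpha^{\ell^{e-1}}\beta\rangle$ (so $s=1$, $r_1=f\ge 2$) is of type (ii) yet has $H\cap B=\langle\beta^{\ell}\rangle\ne 1$, since its $\ell^{1}$-th power lands in $B$; and $\langle\alpha,\beta\rangle$ itself is of type (i) with $H\cap B$ nontrivial and $H\ne H\cap B$. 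So the invariants $(s,r_2,\,H\cap B \text{ trivial or not})$ do not separate the three families, and an argument built on this partition would double-count or miss subgroups. The correct dichotomies — which your own earlier normalisation already produces — are: $H$ is of type (i) exactly when the lift of $\alpha^{\ell^{e-s}}$ can be chosen inside $A$, i.e.\ $k\equiv 0\pmod{\ell^{f-r_2}}$, equivalently $H=(H\cap A)\times(H\cap B)$, equivalently $s=0$ or $r_1=0$ after normalisation; among the remaining subgroups, type (ii) versus type (iii) is decided by whether $H$ is cyclic, i.e.\ by whether $r_1-r_2\ge s$ or $r_1-r_2<s$. If you replace your stated trichotomy by this one, the rest of your argument goes through and coincides with the paper's proof.
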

\begin{proof}
	An arbitrary subgroup of $\langle \alpha, \beta \rangle$ is of the form $\langle \alpha^{a_1}\beta^{b_1}, \cdots, \alpha^{a_l}\beta^{b_l}  \rangle$ for some $l \geq 0$. We note that we have $\langle \alpha^{a_1}\beta^{b_1}, \cdots, \alpha^{a_l}\beta^{b_l}  \rangle = \langle \alpha^a\beta^b,\beta^c \rangle$ for some $a$, $b$ and $c$; this can be seen by looking at the matrix
	\[\begin{pmatrix}
		a_1	& b_1 \\ a_2 & b_2 \\ \vdots & \vdots \\ a_l & b_l
	\end{pmatrix}\]
	where each row represents a generator of the subgroup, and observing that the problem can be translated to finding a row echelon form of this matrix using elementary row operations. To do this, we can first assume that $a_1=\gcd(a_1, \cdots , a_l)$, and then $a_2= \cdots = a_l = 0$, redefining the $b_i$ as needed. Then we can assume that $b_2 = \gcd(b_2, \cdots , b_l)$, and then $b_3= \cdots = b_l =0$. We therefore obtain
	\[\begin{pmatrix}
		a_1	& b_1 \\ 0 & b_2 \\ 0 & 0 \\ \vdots & \vdots \\ 0 & 0
	\end{pmatrix}.\]
	We rewrite this as
	\[\begin{pmatrix}
		a	& b \\ 0 & c
	\end{pmatrix}.\]
	This precisely corresponds to the situation $\langle \alpha^a\beta^b,\beta^c \rangle$ we have above.
	
	We now write
	\begin{align*}
		&a=n\ell^{e-s},\\
		&b=m_1\ell^{f-r_1},\\
		&c=m_2\ell^{f-r_2},
	\end{align*}
	with $\ell \nmid nm_1m_2$. By replacing $b$ and $c$ with suitable powers, we may further assume, without loss of generality, that $m_1=m_2=1$.
	
	If $r_1=0$ or $s=0$, we get groups of type (i). Note that we may take a suitable power of $\alpha^a$ to get $n=1$ without loss of generality. Now assume that $s,r_1 \geq 1$, and now without loss of generality, we may assume that $r_1>r_2$ as otherwise this again corresponds to a group of type (i).
	
	If $r_2=0$, this corresponds to the following matrix:
	\[\begin{pmatrix}
		n\ell^{e-s}	& \ell^{f-r_1} \\ 0 & 0
	\end{pmatrix}.\]
	We ask which multiples $m$ of the top row of the above matrix correspond to generating the same group. That is, which multiples change $n$, but don't change $s,r_1$, and leave the coefficient of $\ell^{f-r_1}$ as $1$? To this end, we see that $m \equiv 1 \pmod{\ell^{r_1}}$. We further note that $m$ is defined modulo $\ell^{\min\{s,r_1\}}$, and so we have two cases:
	\begin{enumerate}[label=(\Roman*)]
		\item $s \leq r_1$,
		\item  $s > r_1$.
	\end{enumerate}
	Suppose first that (I) $s \leq r_1$. Then $m=1$, and so each choice of $n$ with $1 \leq n < \ell^s$ with $\ell \nmid n$ gives a distinct group. We therefore have $\varphi(\ell^s)$ such choices for $n$.
	
	Now suppose that (II) $s > r_1$. Then $m=1+x\ell^{r_1}$ for integers $x$ such that $m<\ell^s$. So each choice of $n$ with $1 \leq n < \ell^{r_1}$ with $\ell\nmid n$ gives a distinct group, giving $\varphi(\ell^{r_1})$ choices for $n$ modulo $\ell^{\min\{s,r_1\}}=\ell^{\min\{s_1,r\}}$.
	
	We may therefore combine cases (I) and (II) by saying that we have $\varphi(\ell^{\min\{s,r_1\}})$ choices for $n$. This then gives us the parameters for generating groups of type (ii).
	
	Suppose now then that $s \geq 1$ and $r_1>r_2>0$. This corresponds to the following matrix:
	\[\begin{pmatrix}
		n\ell^{e-s}	& \ell^{f-r_1} \\ 0 & \ell^{f-r_2}
	\end{pmatrix}.\]
	Note that we again have that $1 \leq n < \ell^{\min\{s,r_1\}}$ with $\ell \nmid n$, and so again have $\varphi(\ell^{\min\{s,r_1\}})$ choices for $n$. We must also have that no power of the first generator is equal to the second (otherwise obtaining a group of type (ii)). To this end, we must have $r_1-r_2<s$. We thus obtain $0<r_1-r_2<s$, noting that any choice of $r_2$ with this restriction does not affect the choices for $n$.
	
\end{proof}
For the following, we again abuse notation by referring to $\ell_i$ as $\ell$.
\begin{proposition}\label{subgroup_counts}
	Let $M:=\min\{e_i,f_i\}$. Then for each $1\leq i \leq m$, there are
	\[(e_i+1)(f_i+1)+\Sigma^{(1)}_i+\Sigma^{(2)}_i\]
	subgroups of $\langle \alpha_i,\beta_i \rangle$, where 
	\[\Sigma^{(1)}_i=(e_i+f_i+1)(\ell^M-1)-2\left[M\ell^M-\frac{1-\ell^M}{1-\ell}\right], \text{ and}\]
		\begin{align*}
			&\Sigma^{(2)}_i=(1-2M)(M-1)\ell^M-2\\
			&+\frac{1}{1-\ell}\left[\frac{4(1-\ell^M)}{1-\ell}-(2+3\ell+(4M-5)\ell^M)+(f_i+e_i)((1-M)\ell^{M+1}+M\ell^M-\ell)\right]
		\end{align*}
	give a count for the number of subgroups of types (ii) and (iii) respectively. 
\end{proposition}
\begin{proof}
	For convenience, we again drop the subscript $i$ throughout this proof. We count the subgroups in Proposition \ref{cyclic_subgroups}. In the following calculations, we adopt the convention that a sum of zero terms is $0$. It is clear to see that there are $(e+1)(f+1)$ subgroups of type (i). There are
	\[\sum_{r_1=1}^f\sum_{s=1}^e\varphi\left(\ell^{\min\{s,r_1\}}\right)\]
	subgroups of type (ii). This may be written as
	\[\sum_{r=1}^M(e+f-(2r-1))\varphi(\ell^r).\]
	The expression can be easily shown to evaluate to
	\[(e+f+1)(\ell^M-1)-2\left[M\ell^M-\frac{1-\ell^M}{1-\ell}\right].\]
		Counting subgroups of type (iii) requires us to make several ordered choices on the parameters $s,a,r_1$ and $n$, where $a:=r_1-r_2$. To begin with, we start by choosing a value for $2 \leq s \leq e$; given our choice of $s$, we may make a choice for $a$ such that $1 \leq a \leq s-1$. We are then restricted to choose an $r_1$ such that $1+a \leq r_1 \leq f$, and finally we have $\varphi(\ell^{\min\{s,r_1\}})$ choices for $n$. There are therefore
	\[\sum_{s=2}^e\sum_{a=1}^{s-1}\sum_{r_1=1+a}^f\varphi\left(\ell^{\min\{s,r_1\}}\right)\]
	subgroups of type (iii). This can be rewritten as
	\[\sum_{r=2}^M\left[(r-1)(f+e)-(r-1)^2-(r-1)r\right]\varphi(\ell^r).\]
	This sum also telescopes, and evaluates to
	\begin{align*}
		&(1-2M)(M-1)\ell^M-2\\
		&+\frac{1}{1-\ell}\left[\frac{4(1-\ell^M)}{1-\ell}-(2+3\ell+(4M-5)\ell^M)+(f+e)((1-M)\ell^{M+1}+M\ell^M-\ell)\right].
	\end{align*}
\end{proof}
As we would expect, one observes that the counts given in Proposition \ref{subgroup_counts} are all symmetric in $e$ and $f$.

We note that $H=\langle \sigma, \tau, \alpha \rangle$ is the unique subgroup of $\Hol(N)$ of order $pq^{e_0+1}$ coprime to its index $\ell_1^{e_1+f_1}\cdots\ell_m^{e_m+f_m}$. Therefore, as any transitive subgroup $M$ has order divisible by $pq$, it follows that $M \cap H$ must be transitive on $N$.
The subgroups of order divisible by $pq$ in $H$ are:
\begin{align*}
	& N \rtimes \left\langle \alpha^{q^{e_0-c}} \right \rangle, 0 \leq c \leq e_0,\\
	& J_{t,c}:=\left\langle \sigma,\left[\tau,\alpha^{tq^{e_0-c}}\right] \right\rangle, 1 \leq c \leq e_0, t \in \mathbb{Z}_{q^{e_0}}^{\times},\text{ and }\\
	& \left\langle \sigma, \alpha^{q^{e_0-c}} \right\rangle, 1 \leq c \leq e_0.
\end{align*}
It is clear that those groups containing $N$ or of the form $J_{t,c}$ are transitive on $N$. No group of the last type is transitive.

Now $N$ is normal in $\Hol(N)$, so can be extended by any subgroup of $\Aut(N)$ to give a transitive subgroup $M$. The normaliser of $J_{t,c}$ in $\Aut(N)$ is $\Aut(\langle \sigma \rangle) \cong \langle \alpha,\alpha_1, \cdots , \alpha_m \rangle$ since if $\phi \in \Aut(N)$ and $\phi(\tau)\neq \tau$, we have \[\phi [\tau,\alpha^{tq^{e_0-c}}]\phi^{-1}=[\phi(\tau),\alpha^{tq^{e_0-c}}] \notin J_{t,c}.\]
Hence if $M$ is a transitive subgroup containing $J_{t,c}$ but not $N$, then $M$ is $J_{t,c}$ extended by some subgroup of $\Aut(\langle \sigma \rangle)$. Note that $J_{t,c} \rtimes \langle \alpha^{q^{e_0-c'}} \rangle$ contains $N$ for $c'\geq c$, and is equal to $J_{t,c}$ if $c' < c$, so we take account of this below to ensure there are no repetitions in our list of transitive subgroups. The transitive subgroups of $\Hol(N)$ are shown in Table \ref{cyclic_trans_subgroups}.
\begin{table}[H]
	\setlength{\extrarowheight}{3mm}
	\begin{tabular}{|c|c|c|}
		\hline
		Key					&	Parameters				&	Group\\
		\hline
		(A)\label{N}		&	$0 \leq c \leq e_0$		&	$N \rtimes \left \langle \alpha^{q^{e_0-c}},X_i \bigm | 1\leq i \leq m \right \rangle$\\[5pt]
		\hline
		(B)\label{J_{t,c}}	&	$0 \leq c_i \leq e_0$	&	$J_{t,c} \rtimes \left \langle \alpha_i^{\ell_i^{e_i-c_i}} \bigm | 1 \leq i \leq m \right \rangle$\\[7.5pt]
		\hline
	\end{tabular}
\caption{Transitive subgroups for $N$ cyclic}
\label{cyclic_trans_subgroups}
\end{table}
	Here $X_i$ is one of the subgroups of $\langle \alpha_i , \beta_i \rangle$ outlined in Proposition \ref{cyclic_subgroups}. For groups of type (B), we have $1\leq c \leq e_0$, and in all other cases we have $0 \leq c_i \leq e_i$. In total, there are:
\[(1+e_0)\prod_{1\leq i \leq m}\left[(e_i+1)(f_i+1)+\Sigma^{(1)}_i+\Sigma^{(2)}_i\right]\]
groups of type (A).

For a given choice of $X_i$ and $c$, these groups have order $pq^{c+1}d$ where $d$ is some divisor of $(p-1)(q-1)$ coprime to $q$ coming from the orders of the $\alpha_i$ and $\beta_i$. For some factorisation $d=d_1d_2d_3$, they have abstract isomorphism type
\[ ((C_p \rtimes C_{q^cd_1}) \times (C_q \rtimes C_{d_2})) \rtimes C_{d_3},\]
where, slightly abusing notation, $C_{d_1} \leq \Aut(\langle\sigma\rangle)$, $C_{d_2} \leq \Aut(\langle\tau\rangle)$, and $C_{d_3}$ is generated by elements of the form $\alpha_i^x\beta_i^y$ for $1 \leq i \leq m$.

For fixed $t$ and $c$, the number of groups of type (B) is
\[\prod_{1 \leq i \leq m}(e_i+1).\]
For a given choice of the $c_i$, and for any $t$, such a group has order
\[pq^{c}\prod_{1 \leq i \leq m}\ell_i^{c_i}\]
note that there are $\varphi(q^c)$ such groups of this order. They are abstractly isomorphic to
\[ C_p \rtimes C_{q^{c}d}\]
where $d=\ell_1^{c_1}\cdots \ell_m^{c_m}$ is some divisor of $(p-1)$ coprime to $q$.

The stabiliser of $1_N$ in the above subgroups for which $c=1$ has normal complement (either $N$ or $J_{t,1}$), so all the corresponding field extensions to those groups are almost classically Galois. For $c>1$, $J'_{t,c}:=\mathrm{Stab}_{J_{t,c}}(1_N)=\langle \alpha^{q^{e_0-(c-1)}} \rangle$. A normal complement to this subgroup in $J_{t,c}$ would have order $pq$, but the only subgroup of $J_{t,c}$ of order $pq$ is $\langle \sigma,\alpha^{q^{e_0-1}} \rangle$, which has nontrivial intersection with $J'_{t,c}$. Thus any transitive subgroup of type (B) for $c>1$ does not contain a regular normal subgroup, and so the corresponding extensions fail to be almost classically Galois.

\begin{lemma}\label{cyclic_isoms}
	Fix $1\leq c \leq e_0$. Then for each choice of subgroup $A\leq\langle \alpha_1,\cdots,\alpha_m\rangle$, the $\varphi(q^{c})$ groups given by $J_{t,c} \rtimes A$ for $1 \leq t <q^c$ with $q \nmid t$, are isomorphic. Between any two transitive subgroups of type \emph{(A)} or \emph{(B)}, there are no other isomorphisms, either as permutation subgroups of $\Hol(N)$, or even as abstract groups.
\end{lemma}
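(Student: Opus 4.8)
The plan is to prove the two claims in Lemma~\ref{cyclic_isoms} separately: first the positive statement that the $\varphi(q^c)$ groups $J_{t,c}\rtimes A$ (for varying $t$, fixed $c$ and $A$) are pairwise isomorphic as permutation groups, and then the negative statement that no other coincidences occur.

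For the positive part, I would fix $c$ and $A$ and show directly that conjugation inside $\Hol(N)$ by a suitable element carries $J_{1,c}\rtimes A$ onto $J_{t,c}\rtimes A$ for each admissible $t$. The natural candidate is conjugation by an element of $\langle\alpha\rangle$: since $\alpha$ has order $q^{e_0}$ and acts on the $\langle\tau\rangle$-factor trivially but permutes the powers $\alpha^{tq^{e_0-c}}$ among themselves, one computes that $\alpha^{s}[\tau,\alpha^{q^{e_0-c}}]\alpha^{-s}=[\tau,\alpha^{q^{e_0-c}}]$ — so that doesn't move $t$. Instead the right move is to use an automorphism of $\langle\tau\rangle$, i.e. conjugate by $\beta$ for some generator $\beta$ of $\Aut(\langle\tau\rangle)$: replacing $\tau$ by $\tau^{k}$ sends $[\tau,\alpha^{tq^{e_0-c}}]$ to $[\tau^{k},\alpha^{tq^{e_0-c}}]=[\tau,\alpha^{tq^{e_0-c}}]^{?}$, and tracking how the exponent $t$ transforms modulo $q^{c}$ shows the orbit of $t$ under this action is all of $(\mathbb{Z}/q^{c}\mathbb{Z})^{\times}$ — but one must check that $\beta$ also normalises $\langle\sigma\rangle$ and $A=\langle\alpha_i^{\ell_i^{e_i-c_i}}\rangle$, which it does because $\beta$ lies in the $\langle\tau\rangle$-factor and $A$ lies in the $\langle\sigma\rangle$-factor, and these commute in the abelian group $\Aut(N)$. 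Since conjugation in $\Hol(N)$ restricts to a permutation-group isomorphism preserving the stabiliser of $1_N$ (which is generated by the $\alpha$-part and $A$ in each case), this gives an isomorphism of permutation groups, a fortiori of abstract groups.

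For the negative part — that there are no \emph{other} isomorphisms — I would argue that in each transitive subgroup $M$ of type (\ref{N}) or (\ref{J_{t,c}}), the subgroup $\langle\sigma\rangle\cong C_p$ is characteristic (it is, e.g., the unique Sylow $p$-subgroup, or can be pinned down via the order and the action), so any abstract isomorphism $M_1\to M_2$ restricts to the quotients and to $\langle\sigma\rangle$; then invoke Proposition~\ref{rel-aut-char} and Proposition~\ref{ab-aut}. Concretely: type (\ref{N}) groups contain a regular normal subgroup $N$ while type (\ref{J_{t,c}}) groups with $c>1$ do not (as already noted in the text), so no isomorphism crosses between those families; within type (\ref{N}), since $\Aut(N)$ is abelian, Proposition~\ref{ab-aut} forces $M_1=M_2$ as subgroups of $\Hol(N)$ whenever $\phi(N)=N$, and one checks $N$ must map to $N$ because it is the unique such regular normal subgroup (or the unique abelian subgroup of its order with the right index behaviour); within type (\ref{J_{t,c}}) one similarly shows $J_{t,c}$ and the complementary part $A$ are forced, so the only freedom is the parameter $t$, which is exactly the freedom resolved in the positive part. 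The abstract-isomorphism-type computations listed just before the lemma (the explicit forms $((C_p\rtimes C_{q^cd_1})\times(C_q\rtimes C_{d_2}))\rtimes C_{d_3}$ and $C_p\rtimes C_{q^cd}$) are then used to confirm that distinct values of $(c, \{c_i\}, A)$ genuinely give non-isomorphic abstract groups, by comparing orders and, for groups of equal order, the structure of the centre or the action of the complement on $C_p$.

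The main obstacle I anticipate is the bookkeeping in the negative part: showing rigorously that $N$ (resp. $J_{t,c}$) is characteristic in $M$ and is the \emph{unique} subgroup of its kind, so that Propositions~\ref{ab-aut} and~\ref{rel-aut-char} actually apply, and then organising the case analysis so that no pair of parameter choices is overlooked. In particular one has to be careful with small or degenerate cases (e.g. when some $c_i=0$, or when $A$ is trivial, or $c=1$ versus $c>1$ where the almost-classically-Galois dichotomy kicks in) and with the fact that "isomorphic as permutation groups" is strictly stronger than "isomorphic as abstract groups", so the stabiliser subgroup must be tracked throughout. Once the characteristic-subgroup facts are in place, the rest reduces to the routine comparisons of the explicit isomorphism types already tabulated.
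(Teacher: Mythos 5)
Your positive part is essentially the paper's argument: conjugate by an automorphism $\phi$ of the $\langle\tau\rangle$-factor (the paper takes $\phi(\tau)=\tau^t$, so that $\phi[\tau,\alpha^{tq^{e_0-c}}]\phi^{-1}=[\tau,\alpha^{q^{e_0-c}}]^t$), and use that $\Aut(N)$ is abelian so that $\phi$ centralises $A$ and fixes the stabiliser of $1_N$, giving an isomorphism of permutation groups. Your initial detour through $\alpha^s$ and the phrase ``the orbit of $t$ is all of $(\mathbb{Z}/q^c\mathbb{Z})^\times$'' are harmless; the mechanism is the same.

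The negative part, however, contains a step that would fail as stated. You propose to separate type (\ref{N}) from type (\ref{J_{t,c}}) by the presence of a regular normal subgroup, noting this works for $c>1$. But for $c=1$ the group $J_{t,1}\rtimes A$ \emph{does} contain a regular normal subgroup, namely $J_{t,1}$ itself (this is exactly why those extensions are almost classically Galois), so this invariant cannot distinguish $J_{t,1}\rtimes A$ from a type-(\ref{N}) group of the same order, e.g. $N\rtimes\langle\alpha_1\rangle$ versus $J_{t,1}\rtimes\langle\alpha_1\rangle$. The paper's invariant is different and covers all $c\geq 1$: a type-(\ref{N}) group contains the abelian subgroup $N\cong C_{pq}$, whereas in $J_{t,c}\rtimes A$ the Sylow $p$-subgroup $\langle\sigma\rangle$ is self-centralising (every nontrivial element of $\langle\alpha^{tq^{e_0-c}}\rangle\times A$ acts faithfully on $\sigma$), so no abelian subgroup of order $pq$ exists. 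Relatedly, your claim that $N$ is ``the unique regular normal subgroup'' of a type-(\ref{N}) group is false in general ($N\rtimes\langle\alpha^{q^{e_0-1}}\rangle$ also normalises $J_{t,1}$); what is true, and what you need in order to invoke Proposition~\ref{ab-aut}, is that $N$ is the unique abelian normal subgroup of order $pq$, again by the self-centralising property of $\langle\sigma\rangle$. Your parenthetical fallbacks (compare centres, or the action on $C_p$) point in the right direction, but they are the actual content of the argument rather than a routine afterthought, so they need to be carried out. The remaining bookkeeping (distinct $c$ or $A$ give distinct orders, since $A$ lies in a cyclic group and is determined by its order) matches the paper.
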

\begin{proof}
	Let $\phi \in \Aut(N)$ such that $\phi(\tau)=\tau^t$, then
	\[\phi [\tau,\alpha^{tq^{e_0-c}}]\phi^{-1}= [\phi(\tau),\alpha^{tq^{e_0-c}}]= [\tau,\alpha^{q^{e_0-c}}]^t.\]
	Thus conjugation by $\phi$ gives an isomorphism between $J_{t,c}$ and $J_{1,c}$, which extends to an isomorphism between $J_{t,c} \rtimes A$ and $J_{1,c} \rtimes A$ with $A<\langle \alpha_1,\cdots,\alpha_m \rangle$. Note that as conjugation by $\phi$ fixes any automorphism, and hence fixes the stabiliser of $1_N$, it also shows that they are isomorphic as permutation groups.
	Next, note that two groups $J_{1,c}\rtimes A$ and $J_{1,c'}\rtimes A'$ with $A,A'<\langle \alpha_1,\cdots,\alpha_m \rangle$ are isomorphic if and only if $c=c'$ and $A=A'$. This is clear when noting that choosing different values for $c$ or generators for $A$ gives a group of a different order.
	Finally, by Proposition \ref{ab-aut}, no two groups of type (A) are isomorphic, nor can they be isomorphic to any group containing $J_{1,c}$ for any $c$ and $A<\langle \alpha_1, \cdots, \alpha_m \rangle$ because the groups $J_{1,c} \rtimes A$ do not contain an abelian subgroup of order $pq$.
\end{proof}

\begin{lemma}\label{cyclic_HGS_isom}
	The numbers of Hopf-Galois structures per isomorphism class of transitive subgroups of $\Hol(N)$ are as in Table \ref{cyclic-HGS}.
\end{lemma}
	\begin{table}
		\setlength{\extrarowheight}{1.5mm}
		\bigskip
		
		\centering
		\scalebox{1}{
			\begin{tabular}{|c|c|c|c|}
				\hline
				Parameters 			& Group 				& \#HGS per isom. class & Comments\\
				\hline
				$A\leq \Aut(N)$		& $N \rtimes A$			& $1$					& each in unique isom. class\\
				\hline
				$1\leq t \leq q-1$	& $J_{t,1}$				& $p$					& single isom. class\\
				\hline
				$c>1$,  $t \in \mathbb{Z}_{q^c}$ or 			&						&						& unique isom. class for\\
				$\{1\}\neq A \leq \Aut(\langle\sigma\rangle)$  & $J_{t,c}\rtimes A$ & $q^{c-1}$ & fixed $A$ and $c$ \\
				with $\alpha \notin A$ &						&						&\\
				\hline
		\end{tabular}
	}
		\vskip5mm
		\caption{Hopf-Galois Structures of Cyclic Type}
		\label{cyclic-HGS}
	\end{table}
\begin{proof}
	The first two rows follow from Lemma $4.3$ of \cite{BML21}. So now suppose $M:=J_{t,c}\rtimes A$ and suppose that either $c>1$ or $A$ is nontrivial. Without loss of generality, we may assume that $t=1$. Suppose explicitly that $A=\langle \alpha_1^{b_1},\cdots,\alpha_m^{b_m}\rangle$, such that $\alpha_i(\sigma)=\sigma^{a_{\alpha_i}}$. Let $M':=\left\langle \alpha^{q^{e_0-(c-1)}} \right\rangle \times A$ denote the stabiliser of $1_N$ in $M$. We first wish to compute $|\Aut(M,M')|$, and so consider an element $\phi \in \Aut(M)$. We have the following
	\begin{align}
		&\phi(\sigma)=\sigma^x \text{ for some } 1 \leq x \leq p-1, \label{phi_sigma}\\
		&\phi\left(\left[\tau,\alpha^{q^{e_0-c}}\right]\right)=\sigma^y\left[\tau,\alpha^{q^{e_0-c}}\right] \text{ for some } 0\leq y \leq p-1, \text{ and} \label{phi_taualpha}\\
		&\phi(\alpha_i^{y_i})=[\sigma^{x_i},\alpha_i^{y_i}] \text{ for some } 0\leq x_i < p-1. \label{phi_alphai}
	\end{align}
	Given that the generator $\left[\tau,\alpha^{q^{e_0-c}}\right]$ commutes with any $\alpha_i^{y_i}$, we must have
	\begin{equation}\label{alpha_commute}
		y(1-a_{\alpha_i}^{y_i})=x_i(1-a_{\alpha}^{q^{e_0-c}}) \;\; \text{ for all } 1\leq i \leq m.
	\end{equation}	
	Now, if $\phi$ fixes $M'$, then $x_i=0$ in (\ref{phi_alphai}), so $\phi$ acts as identity on $A$. By (\ref{phi_taualpha}), the image of $\alpha^{q^{e_0-(c-1)}}$ under $\phi$ is
	\[\left(\sigma^y\left[\tau,\alpha^{q^{e_0-c}}\right]\right)^q.\]
	When $c>1$, this is in $M'$ if and only if $y=0$. Note that if $c=1$ with $A$ nontrivial, then (\ref{alpha_commute}) with $x_i=0$ again forces $y=0$. There are therefore $p-1$ choices for $\phi$.
	
	Now, as there are $\varphi(q^c)=q^{c-1}(q-1)$ choices for $t$ in the isomorphism class of $M$, there are therefore
	\[\varphi(q^c)\frac{|\Aut(M,M')|}{|\Aut(N)|}=\varphi(q^c)\frac{p-1}{(p-1)(q-1)}=q^{c-1}\]
	Hopf-Galois structures of type $C_{pq}$ with group $M$.
\end{proof}
Summarising, we have:
\begin{theorem}\label{cyclic_HGS_total}
	There are
	\[(1+e_0)\prod_{1\leq i \leq m}\left[(e_i+1)(f_i+1)+\Sigma_i^{(1)}+\Sigma_i^{(2)}\right] + e_0\prod_{1 \leq i \leq m}(e_i+1)\]
	isomorphism types of permutation groups $G$ of degree $pq$ which are realised by a Hopf-Galois structure of cyclic type.
	
	These include the two regular groups, i.e. the
	cyclic and non-abelian groups of order $pq$ (for which the corresponding
	Galois extensions have $1$ and $p$ Hopf-Galois structures of cyclic type
	respectively).
	
	There are $(e_0-1)\prod_{1 \leq i \leq m}(e_i+1)$ groups for which the corresponding extensions fail to be almost classically Galois; these correspond to groups of type \emph{(B)} with $c>1$. For each of these, there are $q^{c-1}$ Hopf-Galois structures of cyclic type.
	
	For all the remaining groups $G$, any field extension $L/K$	realising $G$ is almost classically Galois and admits a unique Hopf-Galois structure of cyclic type.
\end{theorem}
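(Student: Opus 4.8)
The plan is to assemble pieces that are already in hand. \textbf{Counting isomorphism types.} By the classification in \eqref{N} and \eqref{J_{t,c}}, every transitive subgroup of $\Hol(N)$ is either of type (1) --- and there are exactly $(1+e_0)\prod_{i}\bigl[(e_i+1)(f_i+1)+\Sigma_i^{(1)}+\Sigma_i^{(2)}\bigr]$ of these by the preceding enumeration --- or of type (2), namely $J_{t,c}\rtimes A$ with $1\le c\le e_0$, $1\le t<q^c$, $q\nmid t$, and $A$ a subgroup of $\langle\alpha_1,\dots,\alpha_m\rangle$. Since $\langle\alpha_1,\dots,\alpha_m\rangle$ is an internal direct product of the cyclic $\ell_i$-groups $\langle\alpha_i\rangle$ of order $\ell_i^{e_i}$, there are $\prod_i(e_i+1)$ choices for $A$. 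Now invoke Lemma~\ref{cyclic_isoms}: for fixed $c$ and $A$ the $\varphi(q^c)$ groups $J_{t,c}\rtimes A$ are mutually permutation-isomorphic, distinct pairs $(c,A)$ give abstractly non-isomorphic groups, no two type-(1) groups are isomorphic, and no type-(1) group is isomorphic to a type-(2) group. Hence type (1) contributes $(1+e_0)\prod_i[(e_i+1)(f_i+1)+\Sigma_i^{(1)}+\Sigma_i^{(2)}]$ isomorphism classes and type (2) contributes $e_0\prod_i(e_i+1)$, whose sum is the stated total. As permutation groups of degree $pq$, these are precisely the $G$ realised by a cyclic-type Hopf-Galois structure, via Byott's correspondence (Lemma~\ref{Byott_num_HGS}).

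\textbf{The regular groups.} The cyclic group $C_{pq}=N$ is the type-(1) group with $c=0$ and every $X_i$ trivial; the non-abelian group $C_p\rtimes C_q$ is $J_{t,1}$ (type (2) with $c=1$ and $A$ trivial). Lemma~\ref{cyclic_HGS_isom}(i) gives that $N$ carries exactly one cyclic-type Hopf-Galois structure, and Lemma~\ref{cyclic_HGS_isom}(ii) that $C_p\rtimes C_q$ carries $p$; these are the standard structures on the corresponding Galois extensions, recovering the parenthetical claims.

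\textbf{The almost classically Galois dichotomy.} Every type-(1) group $N\rtimes B$ has $N$ as a regular normal subgroup whose complement is the stabiliser $B$ of $1_N$, and every type-(2) group with $c=1$, namely $J_{t,1}\rtimes A$, has the regular normal subgroup $J_{t,1}$ of order $pq$ with complement $A$; so all these extensions are almost classically Galois. By the observation preceding Lemma~\ref{cyclic_isoms}, a type-(2) group $J_{t,c}\rtimes A$ with $c>1$ contains no regular normal subgroup, so the corresponding extension is not almost classically Galois; here $c$ ranges over the $e_0-1$ values $2,\dots,e_0$ and $A$ over its $\prod_i(e_i+1)$ possibilities, giving $(e_0-1)\prod_i(e_i+1)$ such isomorphism classes. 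The groups not yet accounted for are exactly the almost classically Galois ones other than the two regular groups: the type-(1) groups properly containing $N$, each with one cyclic-type structure by Lemma~\ref{cyclic_HGS_isom}(i), and the groups $J_{t,1}\rtimes A$ with $A$ nontrivial, each with $q^{c-1}=q^0=1$ such structure by Lemma~\ref{cyclic_HGS_isom}(iii). This is the final assertion.

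Essentially all of the real work has been absorbed into the enumeration of the transitive subgroups and into Lemmas~\ref{cyclic_isoms} and~\ref{cyclic_HGS_isom}; the only point requiring care here is the bookkeeping --- checking that the partition of the transitive subgroups into the ``two regular'', ``non-almost-classically-Galois'', and ``remaining'' families is exhaustive and pairwise disjoint, and that the factor $\prod_i(e_i+1)$ is read off correctly from the coprime direct-product structure of $\langle\alpha_1,\dots,\alpha_m\rangle$. I do not expect any genuine obstacle beyond this accounting.
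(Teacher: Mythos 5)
Your proposal is correct and takes essentially the same route as the paper, which states this theorem as a summary of the preceding enumeration of transitive subgroups together with Lemmas~\ref{cyclic_isoms} and~\ref{cyclic_HGS_isom}. Your bookkeeping of the type-(1)/type-(2) isomorphism classes, the two regular groups, and the almost classically Galois dichotomy (via the remark preceding Lemma~\ref{cyclic_isoms}) matches the paper's implicit argument.
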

Note the first and second summands count the number of isomorphism classes of groups of type (A) and (B) respectively.

\begin{remark}\label{cyclic_BML}
	\emph{In the setting of \cite{BML21}, we have $p-1=2q$ with $q-1=2^rs$ where $s$ is odd. Thus, specialising our results to this case, we take  $e_0=1$, $\ell_1=2$, $e_1=1$, $f_1=r$, $s=\ell_2^{f_2} \cdots \ell_m^{f_m}$, and $e_i=0$ for $2 \leq i \leq m$. We get $\Sigma_1^{(1)}=r$, $\Sigma_i^{(1)}=0$ for $2 \leq i \leq m$ and $\Sigma_i^{(2)}=0$ for $1 \leq i \leq m$. Therefore we recover the Cyclic case result of their paper, that there are
		\[2(2(r+1)+r)\prod_{i=2}^m(f_i+1)=(6r+4)\sigma_0(s)\]
		isomorphism types of permutation groups $G$ of degree $pq$ (with $p=2q+1$) which are realised by a Hopf-Galois structure of cyclic type. (Here, $\sigma_0(s)$ counts the number of divisors of $s$.)}
\end{remark}

\subsection{Metabelian case}
Now let $N$ be the non-abelian group of order $pq$:
\[\langle \sigma,\tau | \sigma^p=\tau^q=1,\tau\sigma=\sigma^g\tau \rangle\]
where $g$ has order $q \bmod{p}$. Let $s=(p-1)/q^{e_0}$. Then, by equation $(3.7)$ in \cite{Byo04}, $\Aut(N)$ has order $p(p-1)=pq^{e_0}s$, and is generated by automorphisms $\alpha$, $\beta$, $\epsilon$ of orders $q^{e_0}$, $s$, $p$ respectively, where:
\begin{align*}
	&\alpha(\sigma)=\sigma^{a_{\alpha}}, &&\alpha(\tau)=\tau,\\
	&\beta(\sigma)=\sigma^{a_{\beta}},	&&\beta(\tau)=\tau,\\
	&\epsilon(\sigma)=\sigma,			&&\epsilon(\tau)=\sigma\tau.
\end{align*}
We have $\text{ord}_p(a_{\alpha})=q^{e_0}$ and $\text{ord}_p(a_{\beta})=s$. Without loss of generality, we can assume that $g=a_{\alpha}^{q^{e_0-1}}$. The generators satisfy the following relations:
\[\begin{array}{lcr}
	\beta\alpha=\alpha\beta,	&\alpha\epsilon=\epsilon^{a_{\alpha}}\alpha,	&\beta\epsilon=\epsilon^{a_{\beta}}\beta.
\end{array}\]
So $\Hol(N)=\langle \sigma, \tau, \alpha, \beta, \epsilon \rangle$ is a group of order $sp^2q^{1+e_0}$. Using an idea in \cite{BML21}, we first note that $P:= \langle \sigma, \epsilon \rangle \cong C_p \times C_p$ is the unique Sylow $p$-subgroup of $\Hol(N)$, with complementary subgroup $R:= \langle \tau, \alpha, \beta \rangle \cong C_q \times C_{q^{e_0}} \times C_s$. Thus $\Hol(N)\cong P \rtimes R$. We see that the element $\sigma\epsilon^{g-1} \in P$ commutes with $\tau$:
\[\sigma\epsilon^{g-1}\tau=\sigma\epsilon^{g-1}(\tau)\epsilon^{g-1}=\sigma^g\tau\epsilon^{g-1}=\tau\sigma\epsilon^{g-1}.\]
Writing $P$ additively, we can identify it with the vector space $\mathbb{F}_p^2$, with basis vectors
\begin{align*}
	&\e_1 = \begin{pmatrix} 1\\0	\end{pmatrix},
	&&\e_2 = \begin{pmatrix} 0\\1	\end{pmatrix},
\end{align*}
corresponding to $\sigma$, $\sigma\epsilon^{g-1}$ respectively. The generators $\tau$, $\alpha$, $\beta$ of $R$ can then be identified with the matrices
\[\begin{array}{ccc}
	T=\begin{pmatrix} g & 0 \\ 0 & 1 \end{pmatrix}, &A=\begin{pmatrix} a_{\alpha} & 0 \\ 0 & a_{\alpha} \end{pmatrix}, &B=\begin{pmatrix} a_{\beta} & 0 \\ 0 & a_{\beta} \end{pmatrix}
\end{array}\]
respectively. An element of $\Hol(N)$ is thus written as $[\textbf{v},U]$ with $\textbf{v} \in \mathbb{F}_p^2$ and $U$ a matrix corresponding to an element of $R$.
We now determine conditions for a subgroup of $\Hol(N)$ to be transitive on $N$:
\begin{lemma}\label{metab_trans_conditions}
	A subgroup $M$ of $\Hol(N)$ is transitive on $N$ if and only if it satisfies the following two conditions:
	\begin{enumerate}[label=\emph{(\roman*)}]
		\item the image of $M$ under the quotient map $\Hol(N) \rightarrow R$ is one of
		\begin{itemize}
			\item $\left\langle TA^{uq^{e_0-c}},B^{s/d} \right \rangle$, $0 \leq c \leq e_0$, $d|s$, $u \in \mathbb{Z}_{q^c}^{\times}$,
			\item $\left\langle T,A^{q^{e_0-c}},B^{s/d} \right \rangle$, $1 \leq c \leq e_0$, $d|s$.
		\end{itemize}
		\item $M \cap P$ is one of $\mathbb{F}_p^2$, $\mathbb{F}_p\e_1$, $\mathbb{F}_p\e_2$, each of which is normalised by $R$.
	\end{enumerate}
\end{lemma}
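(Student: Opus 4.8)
\emph{Proof proposal.}
The plan is to exploit the block system on $N$ given by the cosets of the characteristic subgroup $\langle\sigma\rangle$, and to read everything off the decomposition $\Hol(N) = P \rtimes R$. Since $\langle\sigma\rangle$ is the unique Sylow $p$-subgroup of $N$ it is characteristic, so its $q$ cosets form a block system for $\Hol(N)$ acting on $N$, with blocks of size $p$; transitivity on $N$ is then equivalent to transitivity on the $q$ blocks together with transitivity of a block stabiliser on its block. I would first record two short computations. First, each of $\alpha,\beta,\epsilon$ fixes $\tau$ modulo $\langle\sigma\rangle$, so $\Aut(N)$ acts trivially on $N/\langle\sigma\rangle \cong C_q$; combined with $P \leq \langle\sigma\rangle\Aut(N)$, this means the action of $\Hol(N)$ on the blocks factors through $\Hol(N)/P \cong R$ and then through $R/\langle\alpha,\beta\rangle \cong C_q$, with $\tau$ mapping to a generator. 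Second, both $\e_1 = \sigma$ and $\e_2 = \sigma\epsilon^{g-1}$ act on the identity block $\langle\sigma\rangle$ as the translation $\sigma^k \mapsto \sigma^{k+1}$.

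For the forward implication, suppose $M$ is transitive on $N$. Then $pq \mid |M|$, so $M \cap P \neq 1$; since $P$ is a normal Sylow $p$-subgroup of $\Hol(N)$, $M \cap P$ is the unique normal Sylow $p$-subgroup of $M$, of order $p$ or $p^2$. If $|M \cap P| = p^2$ then $M \cap P = \mathbb{F}_p^2$. If $|M \cap P| = p$, write $M \cap P = \mathbb{F}_p v$; then $M \leq N_{\Hol(N)}(\mathbb{F}_p v)$, and since $R$ acts on $\mathbb{F}_p^2$ by the diagonal matrices $T, A, B$ --- whose only nonzero proper invariant subspaces are $\mathbb{F}_p\e_1$ and $\mathbb{F}_p\e_2$ --- if $v$ lay on neither axis we would have $N_{\Hol(N)}(\mathbb{F}_p v) = P \rtimes \langle\alpha,\beta\rangle$, whose image in $R$ is contained in the kernel of $R \to C_q$, contradicting block-transitivity; hence $M \cap P$ is one of the three spaces in (ii). For (i), block-transitivity forces the image $\bar M$ of $M$ in $R$ to satisfy $\bar M \langle\alpha,\beta\rangle = R$; writing $R$ as the product of its Sylow $q$-subgroup $\langle\tau,\alpha\rangle \cong C_q \times C_{q^{e_0}}$ and $\langle\beta\rangle \cong C_s$, and applying Proposition \ref{cyclic_subgroups} with $\ell = q$, $e = e_0$, $f = 1$ to list the subgroups of $\langle\tau,\alpha\rangle$ surjecting onto $\langle\tau,\alpha\rangle/\langle\alpha\rangle$, one obtains exactly the two families appearing in (i).

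Conversely, assume (i) and (ii). By (i), $\bar M$ contains $T$ or some $TA^{uq^{e_0-c}}$, each of which maps to a generator of $C_q$, so $M$ is transitive on the $q$ blocks. By (ii), $M \cap P$ contains $\mathbb{F}_p\e_1$ or $\mathbb{F}_p\e_2$, so the stabiliser in $M$ of the identity block contains a nontrivial translation of that block and is therefore transitive on it; by block-transitivity every block stabiliser is then transitive on its block, whence $M$ is transitive on $N$. The remaining claim, that $\mathbb{F}_p^2, \mathbb{F}_p\e_1, \mathbb{F}_p\e_2$ are each normalised by $R$, is immediate from the diagonal action.

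I expect the main obstacle to be the $q$-local bookkeeping in the forward direction for (i): identifying precisely which subgroups of $R$ arise as $\bar M$ and matching them --- including the deliberately over-counted parameter $u \in \mathbb{Z}_{q^c}^\times$ --- against the specialisation of Proposition \ref{cyclic_subgroups} to $C_q \times C_{q^{e_0}}$. By contrast the conceptual ingredients are short, and care is mainly needed to keep the two implications separate, since block-transitivity alone already yields (i) and excludes off-axis $M \cap P$, while (ii) is used in full strength only in the converse.
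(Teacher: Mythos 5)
Your argument is correct and follows essentially the same route as the paper: decompose $\Hol(N)=P\rtimes R$, note that $M\cap P$ is the normal Sylow $p$-subgroup of $M$ and must be an invariant subspace for the image of $M$ in $R$ (the eigenvalue/diagonal-matrix argument excluding off-axis lines), and observe that transitivity on the $q$ blocks of size $p$ forces the image in $R$ to surject onto $R/\langle A,B\rangle\cong C_q$. Your phrasing via the block system of $\langle\sigma\rangle$-cosets and the normaliser $N_{\Hol(N)}(\mathbb{F}_p v)$, and your appeal to Proposition \ref{cyclic_subgroups} to enumerate the admissible images in $R$, merely make explicit steps the paper leaves terse.
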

\begin{proof}
	Suppose that $M$ is transitive. Then the orbit of $1_N$ under $M \cap P$ must have size $p$, so the projection of $M$ into $R$ cannot be contained in $\Aut(N)$, and must include $T$ in some way. Hence we obtain one of the groups listed in (i).
	
	If $P \nsubseteq M$ then $M \cap P$ has order $p$. Thus $M \cap P= \mathbb{F}_p\e_1$ or $\mathbb{F}_p(\lambda\e_1+\e_2)$ for some $\lambda \in \mathbb{F}_p$. As $M$ contains an element of the form $[\textbf{v},TA^aB^b]$ for some $\textbf{v}\in P$, and $a$ and $b$ in their respective ranges, in the latter case, we have
	\[TA^aB^b(\lambda\e_1+\e_2)=a_{\alpha}^aa_{\beta}^b(g\lambda\e_1+\e_2).\]
	The vector $g\lambda\e_1+\e_2$ does not lie in $M \cap P$ unless $\lambda=0$. Thus $M \cap P$ is one of the subgroups listed in (ii). It is clear that these are all normalised by $R$.
	
	Conversely, it is easy to check that if $M$ satisfies (i) and (ii), then $M$ is transitive.
\end{proof}

\begin{corollary}\label{metab_p^2q_groups}
	The transitive subgroups containing $P$ are:
	\begin{enumerate}[label=\emph{(\roman*)}]
		\item $P \rtimes \left \langle T, A^{q^{e_0-c}},B^{s/d} \right \rangle$, $0 \leq c \leq e_0$, $d|s$. These groups have order $dp^2q^{1+c}$.
		\item $P \rtimes \left \langle TA^{uq^{e_0-c}},B^{s/d} \right \rangle$, $1 \leq c \leq e_0$, $u \in \mathbb{Z}_{q^c}^{\times}$, $d|s$. These groups have order $dp^2q^c$.
	\end{enumerate}
\end{corollary}
\begin{corollary}\label{metab_notP}
	The transitive subgroups not containing $P$ are generated by the set
	\begin{enumerate}[label=\emph{(\Roman*)}]
		\item $\left\{\e_i,[\lambda\e_{3-i},T],[\mu\e_{3-i},A^{q^{e_0-c}}],[\nu\e_{3-i},B^{s/d}]\right\}$, or the set
		\item $\left\{\e_i,[\lambda\e_{3-i},TA^{uq^{e_0-c}}],[\mu\e_{3-i},B^{s/d}]\right\}$
	\end{enumerate}
for choices of $i \in \left\{1,2\right\}$, $0\leq c \leq e_0$ for \emph{(I)}, $1\leq c \leq e_0$ for \emph{(II)}, $u \in \mathbb{Z}_{q^c}^{\times}$, $d \mid s$, and $\lambda, \mu,\nu \in \mathbb{F}_p$.
\end{corollary}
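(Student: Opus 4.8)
The plan is to read this off directly from Lemma~\ref{metab_trans_conditions}, of which it is really a restatement in terms of explicit generators, in the same way that Corollary~\ref{metab_p^2q_groups} handles the case $P\subseteq M$. Let $M$ be a transitive subgroup of $\Hol(N)$ with $P\nsubseteq M$. By Lemma~\ref{metab_trans_conditions}(ii), $M\cap P$ has order $p$ and equals one of $\mathbb{F}_p\e_1$, $\mathbb{F}_p\e_2$; fix $i\in\{1,2\}$ with $M\cap P=\mathbb{F}_p\e_i=\langle\e_i\rangle$. By Lemma~\ref{metab_trans_conditions}(i), the image $\overline{M}$ of $M$ under the quotient map $\Hol(N)\to R$ is either $\langle TA^{uq^{e_0-c}},B^{s/d}\rangle$ or $\langle T,A^{q^{e_0-c}},B^{s/d}\rangle$ for suitable $c$, $u$, $d$ in the stated ranges; these two possibilities will give forms (I) and (II) respectively.

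\textbf{Key steps.} First I would note that $M\cap P$ is precisely the kernel of the restricted map $M\to R$, so $M/(M\cap P)\cong\overline{M}$, and hence $M$ is generated by $\e_i$ together with one preimage in $M$ of each of the two (resp.\ three) displayed generators of $\overline{M}$. Each such preimage is of the form $[\textbf{v},U]$, where $U\in R$ is the corresponding generator of $\overline{M}$ and $\textbf{v}=x_1\e_1+x_2\e_2\in\mathbb{F}_p^2$. Since $[x_i\e_i,\mathrm{id}]\in M\cap P\subseteq M$, the product rule $[\textbf{w},\mathrm{id}][\textbf{v},U]=[\textbf{w}+\textbf{v},U]$ lets me replace $[\textbf{v},U]$ by $[-x_i\e_i,\mathrm{id}][\textbf{v},U]=[x_{3-i}\e_{3-i},U]\in M$ without changing the generating set. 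Relabelling the resulting coefficients of $\e_{3-i}$ as $\lambda,\mu$ (resp.\ $\lambda,\mu,\nu$) then exhibits $M$ with a generating set of form (I) (resp.\ (II)), for the values of $i$, $c$, $u$, $d$ fixed above.

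\textbf{Converse.} For completeness one checks the reverse inclusion: any subgroup $M$ generated by a set of the form (I) or (II) is transitive and does not contain $P$. By construction the image of $M$ in $R$ is one of the subgroups in Lemma~\ref{metab_trans_conditions}(i). Because $T$, $A$, $B$ are diagonal in the basis $\{\e_1,\e_2\}$ (with $A=a_\beta^{0}\cdot$ scalar, $B$ scalar, and $T\e_1=g\e_1$, $T\e_2=\e_2$), conjugating $\e_i$ by any listed generator of $M$ returns a scalar multiple of $\e_i$; passing to the quotient $\Hol(N)\to(P/\mathbb{F}_p\e_i)\rtimes R$ and a short computation there show $M\cap P=\mathbb{F}_p\e_i$, which is one of the subgroups allowed by Lemma~\ref{metab_trans_conditions}(ii). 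Hence $M$ is transitive with $P\nsubseteq M$.

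\textbf{Main obstacle.} The only point requiring any care is the last verification, that $M\cap P$ cannot grow beyond $\mathbb{F}_p\e_i$ once we have normalised the $P$-components of the generators to lie along $\e_{3-i}$. This is exactly where the choice of the second basis vector $\e_2\leftrightarrow\sigma\epsilon^{g-1}$ is used: it makes $T$, $A$, $B$ simultaneously diagonal with $\e_1$, $\e_2$ as (common) eigenvectors, so that the $\e_i$-line is $R$-stable and the semidirect-product arithmetic in $(P/\mathbb{F}_p\e_i)\rtimes R$ closes up. Everything else is immediate bookkeeping on top of Lemma~\ref{metab_trans_conditions}.
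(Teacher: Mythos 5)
Your forward direction is correct and is essentially the paper's (implicit) reasoning: the corollary is stated without proof precisely because it is the translation of Lemma~\ref{metab_trans_conditions} into explicit generators, exactly as you describe. Your reduction of each preimage $[\textbf{v},U]$ to $[x_{3-i}\e_{3-i},U]$ by multiplying by an element of $M\cap P=\mathbb{F}_p\e_i$ is the right bookkeeping, and the identification $M/(M\cap P)\cong\overline{M}$ justifies that these elements together with $\e_i$ generate $M$.

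However, your ``Converse'' paragraph asserts something false: it is \emph{not} true that every set of the form (I) or (II) generates a transitive subgroup not containing $P$. Take $i=1$, $c=0$ in form (I), so the first non-$P$ generator is $x=[\lambda\e_2,T]$; since $T\e_2=\e_2$, one computes $x^q=[q\lambda\e_2,\mathrm{id}]$, which is a nonzero multiple of $\e_2$ whenever $\lambda\neq 0$, forcing $P\subseteq M$. Likewise, if $\lambda$ and $\mu$ fail the relation $(1-a_{\alpha}^{uq^{e_0-c}})\mu=(1-a_{\beta}^{s/d})\lambda$, the commutator of the two non-$P$ generators is a nonzero element of $\mathbb{F}_p\e_2$, again forcing $P\subseteq M$. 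This is exactly why the paper's subsequent Proposition (the derivation of Table~\ref{metabelian-trans-subgroups}) imposes $\lambda=0$ when $c=0$ and the displayed commutation relations before listing the actual groups. Your claim that diagonality of $T$, $A$, $B$ plus ``a short computation'' in $(P/\mathbb{F}_p\e_i)\rtimes R$ yields $M\cap P=\mathbb{F}_p\e_i$ is where this breaks down: diagonality only shows $\mathbb{F}_p\e_i$ is normalised, not that the intersection cannot grow. Fortunately the corollary only claims the forward containment (every transitive $M$ with $P\nsubseteq M$ has a generating set of the stated shape), so your proof of the actual statement stands once the false converse is deleted or weakened to the observation that such sets, when they do generate a group with $M\cap P=\mathbb{F}_p\e_i$, are transitive.
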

We note that not all values of $\lambda, \mu$ and $\nu$ will work, so the following proposition seeks to find the necessary and sufficient conditions on these parameters.
\begin{proposition}[Transitive subgroups]
	Table \ref{metabelian-trans-subgroups} lists the transitive subgroups of $\Hol(N)$ for $N$ metabelian of order $pq$.
\end{proposition}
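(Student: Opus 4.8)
The plan is to read the table off the three results just established: Lemma~\ref{metab_trans_conditions} dictates the shape of any transitive $M \le \Hol(N)$, while Corollaries~\ref{metab_p^2q_groups} and~\ref{metab_notP} supply explicit generating data; what remains is to determine exactly which choices of data yield genuinely distinct subgroups and then to sort the survivors by order and isomorphism type. So I would fix a transitive $M$, use Lemma~\ref{metab_trans_conditions} to record its image $\overline M$ under $\Hol(N) \to R$ and its intersection $M \cap P \in \{\mathbb{F}_p^2,\ \mathbb{F}_p\e_1,\ \mathbb{F}_p\e_2\}$, and split the argument into the case $P \subseteq M$ (that is, $M \cap P = \mathbb{F}_p^2$) and the case $|M \cap P| = p$.

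In the case $P \subseteq M$, since $P$ is normal in $\Hol(N) = P \rtimes R$ one checks directly that $M = P \rtimes \overline M$, so $M$ is determined by $\overline M$ alone and Corollary~\ref{metab_p^2q_groups} is then a complete list of possibilities. The one subtlety here is that the parameter $u$ in $\langle TA^{uq^{e_0-c}}, B^{s/d}\rangle$ is redundant modulo $q$: replacing $u$ by any $u'$ with $u' \equiv u \bmod q$ leaves $\overline M$ unchanged because $T$ has order $q$, and conversely distinct residues mod $q$ give distinct images; so for each $c \ge 1$ and each $d \mid s$ there are only $\varphi(q) = q-1$ such images, and the two sub-families never coincide since for $c \ge 1$ the second contains $A^{q^{e_0-c}}$, hence $T$, as a separate element while the first does not. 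Recording the orders $dp^2q^{\max\{1,c\}}$ and $dp^2q^{1+c}$ finishes this block of the table.

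The substantive work is the case $|M \cap P| = p$, where Corollary~\ref{metab_notP} gives generating sets indexed by $i \in \{1,2\}$, by the $R$-data $(c,d,u)$, and by scalars $\lambda,\mu,\nu \in \mathbb{F}_p$. Two things must be done. First, one imposes the closure conditions that keep $M \cap P$ equal to $\mathbb{F}_p\e_i$ rather than all of $P$: raising a generator $[\lambda\e_{3-i},U]$ to the order of $U$ in $R$ yields the pure element $\lambda\bigl(\sum_k U^k\bigr)\e_{3-i}$, which must vanish, and likewise the mixed products of the listed generators must not produce a nonzero multiple of $\e_{3-i}$. The decisive point is that these geometric sums behave oppositely for $i=1$ and $i=2$: since $T$ acts on $\e_1$ by the order-$q$ scalar $g$ one has $\sum_{k=0}^{q-1} g^k \equiv 0 \bmod p$, whereas $T$ fixes $\e_2$ so the corresponding sum is $q \not\equiv 0 \bmod p$; hence for one value of $i$ a scalar is forced to be $0$ while for the other it stays free, and this asymmetry is exactly what makes certain rows of the table carry an extra $\mathbb{F}_p$-parameter. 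Second, among the admissible tuples $(\lambda,\mu,\nu)$ one must quotient by the conjugation action of the elements $[\kappa\e_{3-i},\mathrm{id}]$, which translate these scalars, and by the diagonal scalars coming from $R$, reducing each free parameter to a single representative (typically $0$ or $1$). I expect this bookkeeping --- deciding which admissible tuples give equal subgroups, and catching the boundary cases where an ``extra'' generator degenerates because $c$, $d$, or a scalar is extremal and a member of one family collapses into another --- to be the main obstacle.

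Finally I would collect the surviving representatives from Corollaries~\ref{metab_p^2q_groups} and~\ref{metab_notP} into Table~\ref{metabelian-trans-subgroups}, tagging each row with its order and, where convenient, its abstract isomorphism type, and then sanity-check the result against the known cases: specialising to $p = 2q+1$ should reproduce the list of Byott and Martin-Lyons, and restricting to the regular transitive subgroups should recover the classification of \cite{Byo04} in the Galois case.
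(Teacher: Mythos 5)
Your plan reproduces the paper's proof almost exactly: the rows with $P\subseteq M$ are read off from Corollary \ref{metab_p^2q_groups} (via $M=P\rtimes\overline{M}$), and the remaining rows come from Corollary \ref{metab_notP} by imposing precisely the two closure conditions you name --- the vanishing of the geometric sum when a generator $[\lambda\e_{3-i},U]$ is raised to the order of $U$ (which is where the $i=1$ versus $i=2$ asymmetry you describe forces $\lambda=0$ for $[\lambda\e_2,T]$ but leaves the corresponding scalar free on the $\e_1$ side), and the commutation relation $(1-a_{\alpha}^{uq^{e_0-c}})\mu=(1-a_{\beta}^{s/d})\lambda$ that ties the remaining scalars to one free parameter, with the degenerate case $(u,c)=(-1,1)$ splitting off as a separate row. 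Two remarks on where you diverge. First, your observation that the image $\left\langle TA^{uq^{e_0-c}},B^{s/d}\right\rangle$ depends on $u$ only modulo $q$ is correct --- $(TA^{uq^{e_0-c}})^{q}$ already generates $\left\langle A^{q^{e_0-(c-1)}}\right\rangle$, so $TA^{u'q^{e_0-c}}$ lies in $\left\langle TA^{uq^{e_0-c}}\right\rangle$ exactly when $u\equiv u'\pmod q$ --- and this is a genuine refinement of the parametrisation $u\in\mathbb{Z}_{q^c}^{\times}$ used in Corollary \ref{metab_p^2q_groups} and Table \ref{metabelian-trans-subgroups}, where each such subgroup is listed $q^{c-1}$ times; if you adopt the reduced parametrisation you should trace its effect on the subgroup counts in Table \ref{metabelian-trans-HGS}. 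Second, the paper deliberately does \emph{not} quotient by conjugation by $[\kappa\e_{3-i},\mathrm{id}]$ at this stage: Table \ref{metabelian-trans-subgroups} records all $p$ distinct (conjugate) subgroups as $\lambda$ ranges over $\mathbb{F}_p$ and only identifies them as permutation groups in Proposition \ref{metab_isoms}; since Byott's formula counts subgroups of $\Hol(N)$ rather than conjugacy classes, if you normalise each free scalar to a single representative you must remember to carry the multiplicity $p$ into the later enumeration.
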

\begin{proof}
	The first two rows in the table are given by Corollary \ref{metab_p^2q_groups}. We now use Corollary \ref{metab_notP} and consider just the case $i=1$. The case $i=2$ is similar.
	
	Suppose first that $M$ is a transitive subgroup generated by the elements in the set given by (I), and consider the generators
	\[x:=\left[\lambda\e_2,T\right], \;\; y:=\left[\mu\e_2,A^{q^{e_0-c}}\right], \;\; z:=\left[\nu\e_2,B^{s/d}\right].\]
	We have that $x^q=q\lambda\e_2$. Therefore $\e_2 \in M$ if and only if $\lambda \neq 0$, but as $P \not\subseteq M$, we must have $\e_2 \notin M$ and so $\lambda=0$.
	Since $\mathbb{F}_p\e_2$ is an eigenspace for $R$ and intersects $P \cap M$ trivially, we require $z$ to normalise the subgroup $\langle \e_1, T, y \rangle$ of order $pq^{1+c}$, and so \[zyz^{-1}=\left[\left(\nu\left(1-a_{\alpha}^{q^{e_0-c}}\right)+a_{\beta}^{s/d}\mu\right)\e_2,A^{q^{e_0-c}}\right],\]
	which is an element of $\langle \e_1, T, y \rangle$ if and only if
	\[\nu\left(1-a_{\alpha}^{q^{e_0-c}}\right)+a_{\beta}^{s/d}\mu=\mu.\]
	That is, if and only if
	\[\left(1-a_{\alpha}^{q^{e_0-c}}\right)\nu=\left(1-a_{\beta}^{s/d}\right)\mu.\]
	We therefore obtain the third row in Table \ref{metabelian-trans-subgroups}, noting that, abusing notation, we rewrite $\lambda,\mu$ and $\nu$ to just depend on the new parameter $\lambda$.
	
	Suppose now that $M$ is a transitive subgroup generated by the elements in the set given by (II), and consider the generators
	\[x:=[\lambda\e_2,TA^{uq^{e_0-c}}] \;\; \text{ and } \;\; y:=[\mu\e_2,B^{s/d}].\]
	Recall that $c\geq 1$. We require $y$ to normalise the subgroup $\langle \e_1,x \rangle$ of order $pq^c$, and so
	\[yxy^{-1}=\left[\left(\mu\left(1-a_{\alpha}^{uq^{e_0-c}}\right)+a_{\beta}^{s/d}\lambda\right)\e_2,TA^{uq^{e_0-c}}\right],\]
	which is an element of $\langle \e_1,x \rangle$ if and only if
	\[\mu\left(1-a_{\alpha}^{uq^{e_0-c}}\right)+a_{\beta}^{s/d}\lambda=\lambda.\]
	That is, if and only if
	\[\left(1-a_{\alpha}^{uq^{e_0-c}}\right)\mu=\left(1-a_{\beta}^{s/d}\right)\lambda.\]
	We therefore obtain the fourth row in Table \ref{metabelian-trans-subgroups}, noting that, abusing notation, we rewrite $\lambda$ and $\mu$ to just depend on the new parameter $\lambda$.
	
	Similar discussions for $i=2$ give the final two rows.
\end{proof}
\begin{sidewaystable}
	\setlength{\extrarowheight}{3.5mm}
	\vskip135mm
	\bigskip
	\centering
	\scalebox{1}{
		\begin{tabular}{|c|c|}
			\hline
			Parameters	&	Group\\
			\hline
			$0\leq c \leq e_0$,  $d|s$			&	$P \rtimes \left\langle T,A^{q^{e_0-c}},B^{s/d} \right\rangle$\\[5pt]
			\hline
			$1\leq c\leq e_0$,  $d|s$, $u \in \mathbb{Z}_{q^c}^{\times}$&	$P \rtimes \left\langle TA^{uq^{e_0-c}},B^{s/d} \right\rangle$\\[5pt]
			\hline
			$0 \leq c \leq e_0,0\leq \lambda \leq p-1$,  $d|s$			&	$\left\langle \e_1,T,\left[(1-a_{\alpha}^{q^{e_0-c}})\lambda\e_2,A^{q^{e_0-c}}\right],\left[(1-a_{\beta}^{s/d})\lambda\e_2,B^{s/d}\right] \right\rangle$\\[5pt]
			\hline
			$1 \leq c \leq e_0,0\leq \lambda \leq p-1$,  $d|s$, $u \in \mathbb{Z}_{q^c}^{\times}$			&	$\left\langle \e_1, \left[(1-a_{\alpha}^{uq^{e_0-c}})\lambda\e_2,TA^{uq^{e_0-c}}\right],\left[(1-a_{\beta}^{s/d})\lambda\e_2,B^{s/d}\right] \right\rangle$\\[5pt]
			\hline
			$0 \leq c \leq e_0,0\leq \lambda \leq p-1$,  $d|s$			&	$\left\langle \e_2,\left[(1-g)\lambda\e_1,T\right],\left[(1-a_{\alpha}^{q^{e_0-c}})\lambda\e_1,A^{q^{e_0-c}}\right],\left[(1-a_{\beta}^{s/d})\lambda\e_1,B^{s/d}\right] \right\rangle$\\[5pt]
			\hline	
			$1 \leq c \leq e_0,0\leq \lambda \leq p-1$,  $d|s$, $u \in \mathbb{Z}_{q^c}^{\times}$			&	$\left\langle \e_2,\left[(1-ga_{\alpha}^{uq^{e_0-c}})\lambda\e_1,TA^{uq^{e_0-c}}\right],\left[(1-a_{\beta}^{s/d})\lambda\e_1,B^{s/d}\right] \right\rangle$\\[5pt]
			\hline
	\end{tabular}}
	\vskip5mm

	\caption{Transitive subgroups for $N$ metabelian}
	\label{metabelian-trans-subgroups}
\end{sidewaystable}

We again denote by $M'$ the stabiliser of $1_N$ in $M$. In order to count the number of Hopf-Galois structures of type $C_p\rtimes C_q$, we now move on to compute $|\Aut(M,M')|$ for each of the subgroups $M$ given in Table \ref{metabelian-trans-subgroups}. We note that $M'=M \cap \Aut(N)$, and that $\Aut(N)=\langle \textbf{f},A,B \rangle$, where $\textbf{f}=\e_1-\e_2$ is the element of $\mathbb{F}_p^2$ corresponding to $\epsilon^{1-g} \in \Aut(N)$.

\begin{proposition}[Isomorphisms]\label{metab_isoms}
	For a fixed $c,d$ and row in Table \ref{metabelian-trans-subgroups}, the groups parametrised by $0 \leq \lambda \leq p-1$ are all isomorphic as permutation groups.
	
	Furthermore, we have the following isomorphisms of permutation groups (we assume that $\lambda=0$, and all other parameters run over all appropriate values unless otherwise stated):
	\[\begin{array}{|c|c|c|c|c|}
		\hline
		\text{Key}	&	\text{Row in Table \ref{metabelian-trans-subgroups}}	& 	\text{Parameters}	&	\text{Groups}	&	\text{Order}\\
		\hline
		\text{(i)}	&	3			&	c=0					&	\langle \e_1,T,B^{s/d} \rangle	&	dpq\\
					&	4			&	c=1, u\neq -1		&	\langle \e_1,TA^{uq^{e_0-1}},B^{s/d} \rangle	&\\
					&	6			&	c=1					&	\langle \e_2,TA^{uq^{e_0-1}},B^{s/d} \rangle	&\\
		\hline
		\text{(ii)}	&	3			&	c=1, u=-1			&	\langle \e_1,TA^{-q^{e_0-1}},B^{s/d} \rangle	&	dpq\\
					&	5			&	c=0					&	\langle \e_2,T,B^{s/d} \rangle	&\\
		\hline
		\text{(iii)}&	3			&	c>0					&	\langle \e_1,T,A^{q^{e_0-c}},B^{s/d} \rangle	&	dpq^{1+c}\\
					&	5			&	c>0					&	\langle \e_2,T,A^{q^{e_0-c}},B^{s/d} \rangle	&\\
		\hline
		\text{(iv)}	&	4			&	c>1					&	\langle \e_1,TA^{uq^{e_0-c}},B^{s/d} \rangle	&	dpq^c\\
					&	6			&	c>1					&	\langle \e_2,TA^{vq^{e_0-c}},B^{s/d} \rangle	&\\
		\hline
	\end{array}\]
	Within rows three to six of Table \ref{metabelian-trans-subgroups}, there are no more isomorphisms, even as abstract groups.
\end{proposition}
\begin{proof}
	For appropriately fixed $c$ and $d$, one first sees that any group in rows three and four of Table \ref{metabelian-trans-subgroups} is conjugate, via some $\mu\e_2$, to the group in the same row with the same $c,d$ but with $\lambda=0$. Likewise for the groups in rows five and six, but via some $\mu\e_1$. This gives an isomorphism as permutation groups, and we may therefore assume that $\lambda=0$ for the rest of the paper, noting that the group represents an isomorphism class of size $p$.
	
	The isomorphisms given in the statement of the proposition should not be too difficult to see. For example, the isomorphism between $M_1:=\langle \e_1,T,B^{s/d} \rangle$ and $M_2:=\langle \e_1,TA^{uq^{e_0-1}},B^{s/d} \rangle$ with $u\neq -1$ may be as follows:
	\begin{align*}
		&\phi(\e_1)=\e_1,\\
		&\phi(T)=\left(TA^{uq^{e_0-1}}\right)^{(1+u)^{-1}},\\
		&\phi(B^{s/d})=B^{s/d}.
	\end{align*}
	This gives an isomorphism of permutation groups as $\phi$ also gives an isomorphism between $\mathrm{Stab}_{M_1}(1_N)=\langle B^{s/d} \rangle$ and $\mathrm{Stab}_{M_2}(1_N)=\langle B^{s/d} \rangle$. The isomorphism between $M_1$ and $M_3:=\langle \e_2,TA^{uq^{e_0-1}},B^{s/d} \rangle$ may be as follows:
	\begin{align*}
		&\phi(\e_1)=\e_2,\\
		&\phi(T)=\left(TA^{uq^{e_0-1}}\right)^{u^{-1}},\\
		&\phi(B^{s/d})=B^{s/d}.
	\end{align*}
	This again gives an isomorphism of permutation groups. The other isomorphisms in the proposition are computed similarly. 
	
	 Noting that every group in rows three to six of Table \ref{metabelian-trans-subgroups} appears exactly once in the proposition, we must now show that there are no further isomorphisms between them.

	 For any $d \mid s$, the groups of type (i) are not isomorphic to any of type (ii) as those of type (ii) contain an abelian group of order $pq$ but those of type (i) do not. For any $1 \leq c \leq e_0$ and $d\mid s$, neither groups of type (i) or (ii) are isomorphic to groups of type (iii) or (iv) as they have order not divisible by $q^2$. For any $1 \leq c \leq e_0$ and $d\mid s$, groups of type (iii) are not isomorphic to groups of type (iv) as those of type (iv) have a cyclic Sylow $q$-subgroup, but those of type (iii) do not. Finally, groups of type (i), (ii), (iii) and (iv) are not isomorphic to groups of the same type for different values of $c$ and $d$, again due to an order argument.	
	
\end{proof}
We now move to determining isomorphisms between groups in the first two rows of Table \ref{metabelian-trans-subgroups}, along with computing $|\Aut(M,M')|$ for each transitive subgroup $M$ of $\Hol(N)$. In light of Proposition \ref{metab_isoms}, for the groups of order not divisible by $p^2$, we now need only consider the groups in rows three and four of Table \ref{metabelian-trans-subgroups} with $\lambda=0$.

\begin{proposition}\label{metab_holgp_isoms}
	Let $M$ be a transitive group in the first row of Table \ref{metabelian-trans-subgroups}. We have Table \ref{row1}.
	
	No two groups in Table \ref{row1} are isomorphic even as abstract groups.
	
	Further, the group $P \rtimes \langle T, B^{s/d} \rangle$ is isomorphic as a permutation group to the group $P \rtimes \langle TA^{-q^{e_0-1}}, B^{s/d} \rangle$ in row $2$ of Table \ref{metabelian-trans-subgroups} with $(u,c)=(-1,1).$
	
	There are no further isomorphisms between groups in row $1$ and groups in row $2$ of Table \ref{metabelian-trans-subgroups}.
	\begin{table}
		\begin{tabular}{|c|c|c|c|}
			\hline
			Parameters	&	$M$	&	Order	&$|\Aut(M,M')|$\\
			\hline
			$c>0$ or $d>1$	& $P \rtimes \langle T,A^{q^{e_0-c}},B^{s/d} \rangle$ & 	$p^2q^{1+c}d$	&$2p(p-1)$\\
			\hline
			$d=0,d=1$				& $P \rtimes \langle T \rangle$	&	$p^2q$	& $p(p-1)$\\
			\hline	
		\end{tabular}
		\caption{$|\Aut(M,M')|$ for groups in row $1$ of Table \ref{metabelian-trans-subgroups}}
		\label{row1}
	\end{table}
\end{proposition}

\begin{proof}
	It is clear that no two groups in Table \ref{row1} are isomorphic as abstract groups because each group has a different order. An isomorphism $\phi$ from $P \rtimes \langle TA^{-q^{e_0-1}}, B^{s/d} \rangle$ to the transitive subgroup $P \rtimes \langle T, B^{s/d} \rangle$ may be given as follows:
	\begin{align*}
		&\phi(\e_i)=\e_{3-i},\\
		&\phi(TA^{-q^{e_0-1}})=T^{-1},\\
		&\phi(B^{s/d})=B^{s/d}.
	\end{align*}
	This also gives an isomorphism of permutation groups as it preserves the stabiliser, $\langle \textbf{f},B^{s/d} \rangle$, of the identity. We note further that, apart from the isomorphisms just described, there are no further isomorphisms between the groups in the first row of Table \ref{metabelian-trans-subgroups} and the groups in the second row. This is because the groups in row $1$ contain an abelian group of order $pq$, namely $\langle \e_2,T \rangle$, but the groups in row $2$ for $(u,c) \neq (-1,1)$ do not.
	
	For each such transitive group $M$, we now compute $|\Aut(M,M')|$. Recall that $\textbf{f}:=\e_1-\e_2$, and that $M'=M\cap \Aut(N)$, and so we have $M'=\langle \textbf{f},A^{q^{e_0-c}},B^{s/d} \rangle$, and suppose that $\theta \in \Aut(M,M')$. We count the number of possible images of $\theta$ on each generator of $M$, noting that we have the restrictions
	\begin{align}
		&\theta(\textbf{f})=\lambda\textbf{f}, \; 1\leq \lambda \leq p-1,\label{theta_f}\\
		&\theta(A^{q^{e_0-c}})=\left[\mu\textbf{f},A^{aq^{e_0-c}}\right], \; 0\leq \mu \leq p-1, a \in \mathbb{Z}_{q^c}^{\times},\label{theta_A}\\
		&\theta(B^{s/d})=\left[\nu\textbf{f},B^{bs/d}\right], \; 0\leq \nu \leq p-1, (b,d)=1. \label{theta_B}
	\end{align}
	As $M$ contains exactly two normal subgroups, namely $\langle\e_1\rangle$ and $\langle\e_2\rangle$, we must have that either $\theta(\e_i)=x_ie_i$ or $\theta(\e_i)=x_i\e_{3-i}$ for $i \in \{1,2\}$ and $1\leq x_i \leq p-1$. By equation (\ref{theta_f}), we obtain that $x_1=x_2$ in both cases. We write
	\begin{equation}\label{theta_T}
		\theta(T)=\left[\textbf{v},T^jA^{kq^{e_0-1}}\right]
	\end{equation}
	for some $\textbf{v} \in \mathbb{F}_p^2,1 \leq j \leq q-1,0 \leq k \leq q-1$, and subsequently determine restrictions on these parameters in the different cases.
	
	We consider separately the case that $(c,d)=(0,1)$. In this case, we have
	\[M= P \rtimes \langle T \rangle \cong C_p \times (C_p \rtimes C_q),\]	
	and $M'=\langle \textbf{f} \rangle$. From (\ref{theta_T}), we also have that $[\textbf{v},T]^q=[1,\text{id}]$, and so $\textbf{v}$ must be an element of $\mathbb{F}_p\e_1$, say $\textbf{v}=\eta\e_1$ for $0 \leq \eta \leq p-1$.
	
	First let $\theta(\e_i)=x\e_i$, then we obtain no further restrictions on the images of $\theta$, giving $p(p-1)$ possibilities. If $\theta(\e_i)=x\e_{3-i}$, then the relation $T\e_2=\e_2T$ means that there is no choice for $\eta$ that works. We therefore have $|\Aut(M,M')|=p(p-1)$.
	
	Now suppose that $(c,d) \neq (0,1)$, and suppose first that $\theta(\e_i)=xe_i$. The relations $A^{q^{e_0-c}}\e_i=a_{\alpha}^{q^{e_0-c}}\e_iA^{q^{e_0-c}}$, $B^{s/d}\e_i=a_{\beta}^{s/d}\e_iB^{s/d}$, $T\e_1=g\e_1T$ and $T\e_2=\e_2T$ give us $a=b=j=1$ and $k=0$. Recalling that $R=\langle T,A,B \rangle$ is abelian, we obtain the following relations:
	\begin{align*}
		&\left(1-a_{\alpha}^{q^{e_0-c}}\right)\textbf{v}=\mu\left(I-T\right)\textbf{f},\\
		&\left(1-a_{\beta}^{s/d}\right)\textbf{v}=\nu\left(I-T\right)\textbf{f},\text{ and }\\
		&\left(1-a_{\alpha}^{q^{e_0-c}}\right)\nu=\left(1-a_{\beta}^{s/d}\right)\mu,
	\end{align*}
	where $I$ is the identity matrix. We therefore have:
	\begin{align*}
		&\theta(\e_i)=x\e_i,	&&\theta(A^{q^{e_0-c}})=\left[\left(1-a_{\alpha}^{q^{e_0-c}}\right)\eta\textbf{f},A^{q^{e_0-c}}\right],\\	&\theta(T)=\left[\eta\e_1,T\right],	&&\theta(B^{s/d})=\left[\left(1-a_{\beta}^{s/d}\right)\eta\textbf{f},B^{s/d}\right],
	\end{align*}
	for $0 \leq \eta \leq p-1$. This gives $p(p-1)$ choices for $\theta$.
	
	Suppose now that $\theta(\e_i)=x\e_{3-i}$. Let $\pi$ be either the identity or the transposition map on $\{1,2\}$. Then similar computations give:
	\begin{align*}
		&\theta(\e_i)=x\e_{\pi(i)},	&&\theta(A^{q^{e_0-c}})=\left[\left(1-a_{\alpha}^{q^{e_0-c}}\right)\eta\textbf{f},A^{q^{e_0-c}}\right],\\	&\theta(T)=\left[(1-g)\eta\e_2,T^{-1}A^{q^{e_0-1}}\right],	&&\theta(B^{s/d})=\left[\left(1-a_{\beta}^{s/d}\right)\eta\textbf{f},B^{s/d}\right],
	\end{align*}
	for $0 \leq \eta \leq p-1$. This again gives $p(p-1)$ choices for $\theta$.
	
	In total, we have $|\Aut(M,M')|=2p(p-1)$.
\end{proof}
We now turn to looking at $|\Aut(M,M')|$ for the transitive subgroups in row 2 of Table \ref{metabelian-trans-subgroups}. We denote by $M_{u,c}$ the groups $P \rtimes \langle TA^{uq^{e_0-c}}\rangle$ which correspond to $d=1$, and by $\widehat{M}_{u,c}$ the groups $P \rtimes \langle TA^{uq^{e_0-c}},B^{s/d}\rangle$ with $d>1$. We recall that $1 \leq c \leq e_0$ and $u \in \mathbb{Z}_{q^c}^{\times}$.
\begin{proposition}
	The groups $M_{u,c}$ and $M_{u,c'}$ are isomorphic as permutation groups if and only if $c=c'$ and one of the following hold:
	\begin{enumerate}[label=\emph{(\Roman*)}]
		\item $u \equiv u' \pmod{q}$, or
		\item $c=1$ and $u+u'+1 \equiv 0 \pmod{q}$, or
		\item $c>1$ and $u \equiv -u' \pmod{q}$.
	\end{enumerate}	
	There are no further isomorphisms. Hence, the groups $M_{u,1}$ fall into  $(q-3)/2$ isomorphism classes of size $2$ for $u\neq -1,(q-1)/2$, and one isomorphism class of size $1$ for $u=(q-1)2$. For $c>1$, the groups $M_{u,c}$ fall into $q^{c-2}(q-1)/2$ isomorphism classes of size $2q$. Similarly for $\widehat{M}_{u,c}$.
	\end{proposition}
We remark that the groups $M_{-1,1}$ and $\widehat{M}_{-1,1}$ have been covered in Proposition \ref{metab_holgp_isoms}. They are both in isomorphism classes of size $2$.
\begin{proof}
	We ask when the groups
	\[M_{u,c}=P\rtimes \left\langle TA^{uq^{e_0-c}} \right\rangle\]
	are isomorphic as permutation groups for different choices of $u$ and $c$. 
	Firstly, we note that different choices of $c$ give different orders of $M_{u,c}$, therefore if $M_{u,c} \cong M_{u',c'}$ then $c=c'$. Now note that any element of $M_{u,c}$ of order $q^c$ has the form \[\left[\textbf{v},\left(TA^{uq^{e_0-c}}\right)^x\right]\]
	for some $\textbf{v}\in P$ and $x \in \mathbb{Z}_{q^c}^{\times}$, and acts on $P$ with two distinct eigenvalues, namely $g^xa_{\alpha}^{xuq^{e_0-c}}$ and $a_{\alpha}^{xuq^{e_0-c}}$. If, therefore, we have a choice of $u,u'$ such that $M_{u,c}\cong M_{u',c}$, then we must have
	\[\left\{g^xa_{\alpha}^{xuq^{e_0-c}},a_{\alpha}^{xuq^{e_0-c}}\right\}=\left\{ga_{\alpha}^{u'q^{e_0-c}},a_{\alpha}^{u'q^{e_0-c}}\right\}\]
	for some $x$. This is equivalent to solving the following equations $\bmod{ \;q^{e_0}}$:
	\begin{align}
		&x(q^{e_0-1}+uq^{e_0-c})\equiv q^{e_0-1}+u'q^{e_0-c}, &&\text{ and } \;\;\;
		xuq^{e_0-c}\equiv u'q^{e_0-c}, \text{ or} \label{simul_1}\\
		&x(q^{e_0-1}+uq^{e_0-c})\equiv u'q^{e_0-c}, &&\text{ and } \;\;\;
		xuq^{e_0-c}\equiv q^{e_0-1}+u'q^{e_0-c} \label{simul_2}.
	\end{align}
	We now turn to solving these systems.
	
	Consider first the pair of equations given by (\ref{simul_1}). We can rewrite by dividing through both equations by $q^{e_0-c}$ to get:
	\begin{align}
		&x(q^{c-1}+u) \equiv q^{c-1}+u' \mod{q^c},\label{firstsimul_1}\\
		&xu \equiv u' \mod{q^c}. \label{firstsimul_2}
	\end{align}
	We may then substitute (\ref{firstsimul_2}) into (\ref{firstsimul_1}) to obtain
	\[
		xq^{c-1}+u' \equiv q^{c-1}+u' \mod{q^c},
	\]
	and so $q^{c-1}(x-1) \equiv 0 \mod{q^c}$. This may be reduced to an equivalence modulo $q$:
	\[
		x \equiv 1 \mod{q}.
	\]
	Substituting $x=1+mq$ into the first equation in (\ref{simul_1}) gives
	\[(1+mq)u \equiv xu \equiv u' \mod{q^c},\]
	which is exactly equation (\ref{firstsimul_2}). Given therefore that $x \equiv 1 \mod{q}$, along with (\ref{firstsimul_2}), we may reduce down again to an equivalence modulo $q$ to obtain
	\[u \equiv u' \mod{q}.\]
	This gives us (I) in the proposition. We now look at the second pair of equation given by (\ref{simul_2}) in slightly less detail. Note that we may again rewrite both of them modulo $q^c$:
	\begin{align}
		&x(q^{c-1}+u) \equiv u' \mod{q^c},\label{secondsimul_1}\\
		&xu \equiv q^{c-1} + u' \mod{q^c}. \label{secondsimul_2}
	\end{align}
	Substituting (\ref{secondsimul_2}) into (\ref{secondsimul_1}) and reducing to an equivalence modulo $q$ gives
	\[x \equiv -1 \mod{q}.\]
	Given this, equation (\ref{secondsimul_1}) reduces to (\ref{secondsimul_2}), so we have just this equation to work with. We may again reduce to an equivalence modulo $q$, but in doing so we must take care and consider the cases $c=1$ and $c>1$ separately. If $c=1$, then (\ref{secondsimul_2}) becomes
	\[-u \equiv 1+u' \mod{q}.\]
	This gives us (II) in the statement of the proposition. If $c>1$ then (\ref{secondsimul_2}) becomes
	\[-u \equiv u' \mod{q}.\]
	This gives us (III).

	The case for $\widehat{M}_{u,c}$ is treated similarly. Note that, when $u\equiv u'\pmod{q}$, we must have $x \equiv 1\pmod{q}$, and when $u\equiv-u'\pmod{q}$, we must have $x \equiv -1 \pmod{q}$.
\end{proof}

\begin{proposition}
	For the groups in the second row of Table \ref{metabelian-trans-subgroups}, we have:
	\[\begin{array}{|c|c|c|}
		\hline
		\text{Parameters}	&	M	&	|\Aut(M,M')|\\
		\hline
		\text{all }u\neq -1,(q-1)/2	&	P \rtimes \langle TA^{uq^{e_0-1}}\rangle		 & p^2(p-1)\\
		\hline
		(u=-1)				&	P \rtimes \langle TA^{-q^{e_0-1}}\rangle		 & p(p-1)\\
		\hline
		u=(q-1)/2			&	P \rtimes \langle TA^{uq^{e_0-1}}\rangle		 & 2p^2(p-1)\\
		\hline
		d>1,\text{all }u\neq (q-1)/2		&	P \rtimes \langle TA^{uq^{e_0-1}},B^{s/d}\rangle & p(p-1)\\
		\hline
		d>1,u=(q-1)/2			&	P \rtimes \langle TA^{uq^{e_0-1}},B^{s/d}\rangle & 2p(p-1)\\
		\hline
		c,d>1, \text{any }u	&	P \rtimes \langle TA^{uq^{e_0-c}},B^{s/d}\rangle & p(p-1)\\
		\hline
	\end{array}\]
\end{proposition}\cblack
\begin{proof}
	We treat the cases for $M=M_{u,c}$ and $M=\widehat{M}_{u,c}$ together by setting $d=1$ and $d>1$ respectively in the following argument. We have $M'=\langle \textbf{f},A^{uq^{e_0-(c-1)}},B^{s/d} \rangle$. As in Proposition \ref{metab_holgp_isoms}, for $\theta \in \Aut(M,M')$, we have either $\theta(\e_i)=x\e_i$ or $\theta(\e_i)=x\e_{3-i}$ for some $1\leq x \leq p-1$. Further, we have
	\begin{align}
		&\theta\left(TA^{uq^{e_0-c}}\right)=\left[\textbf{v},T^aA^{auq^{e_0-c}}\right],\label{theta_TA}\\
		&\theta\left(TA^{uq^{e_0-c}}\right)^q=\theta\left(A^{uq^{e_0-(c-1)}}\right)=\left[\mu\textbf{f},A^{auq^{e_0-(c-1)}}\right], \text{ whenever }c>1,\label{theta_TAstab}\\
		&\theta\left(B^{s/d}\right)=\left[\nu\textbf{f},B^{bs/d}\right],\label{theta_Bstab}
	\end{align}
	for some $a \in \mathbb{Z}_{q^c}^{\times}$, $b$ coprime to $d$ and $0\leq \mu,\nu \leq p-1$. As $B^{s/d}\e_i=a_{\beta}^{s/d}\e_iB^{s/d}$, we must have $b=1$. There are now a number of possibilities, determined by the image of $\theta$ of $\e_i$, whether $c=1$ or $c>1$, the value of $u$, and whether $d=1$ or $d>1$. For clarity, suppose $\textbf{v}=m\e_1+n\e_2$.
	
	First, let us treat the case $c=1$ with $\theta(\e_i)=x\e_i$. In this case, given $TA^{uq^{e_0-1}}\e_1=g^{1+u}\e_1TA^{uq^{e_0-1}}$ and $TA^{uq^{e_0-1}}\e_2=g^u\e_2TA^{uq^{e_0-1}}$, we must have $a=1$. If $u=-1$, we must have $m=0$ in order for $\left[\textbf{v},TA^{-q^{e_0-1}}\right]$ to have order $q$. Other values of $u$ give no restrictions on $\textbf{v}$. If $d>1$, then as $B^{s/d}$ and $TA^{uq^{e_0-c}}$ commute, we have
	\begin{align}
		\label{beta_reltions}
		\begin{split}
			&m(1-a_{\beta}^{s/d})=\nu(1-g^{1+u}),\\
			&n(1-a_{\beta}^{s/d})=\nu(-1+g^u).
		\end{split}
	\end{align}
	If $d=1$, there are no further restrictions.
	
	Now consider $c=1$ with $\theta(\e_i)=x\e_{3-i}$. In this case, we must have $g^{a(1+u)}=g^u$ and $g^{au}=g^{1+u}$, which is only possible when $a=-1$ and $u=(q-1)/2$. If $d>1$, then we once again obtain (\ref{beta_reltions}), otherwise there are no further restrictions. This then retrieves the counts for $|\Aut(M,M')|$ given in the statement of the proposition, corresponding to $c=1$.
	
	Now suppose $c>1$. Note firstly that $\left[\textbf{v},T^aA^{auq^{e_0-c}}\right]$ has order $q^c$ regardless of the choice of $\textbf{v}$. Secondly, given the restriction (\ref{theta_TAstab}), a choice for one of $m,n$, and $\mu$ automatically determines a choice for the other two. 
	
	Suppose that $\theta(\e_i)=x\e_i$. Then we again have $a=1$. If $d=1$, there are no further relations. If $d>1$, we have
	\begin{align*}
		\label{beta_reltions2}
		\begin{split}
			&m(1-a_{\beta}^{s/d})=\nu(1-ga_{\alpha}^{uq^{e_0-c}}),\\
			&n(1-a_{\beta}^{s/d})=\nu(-1+a_{\alpha}^{uq^{e_0-c}}).
		\end{split}
	\end{align*}
	Thus a choice of one of either $m,n$ or $\nu$ determines the value for the other two (and also $\mu$ as above).
	
	Finally, suppose that $\theta(\e_i)=x\e_{3-i}$. In this case, we must have $a_{\alpha}^{auq^{e_0-c}}=ga_{\alpha}^{uq^{e_0-c}}$ and $g^aa_{\alpha}^{auq^{e_0-c}}=a_{\alpha}^{uq^{e_0-c}}$. This system has no solution for $c>1$, and so there are no such elements of $|\Aut(M,M')|$ satisfying this property.
	
	We therefore have:
	\[|\Aut(M_{u,c},M'_{u,c})|=|\Aut(\widehat{M}_{u,c},\widehat{M}'_{u,c})|=p(p-1) \;\; \text{ for } c>1.\]
\end{proof}

\begin{remark}
	\emph{Note that the groups $M_{u,c}$ and $\widehat{M}_{u,c}$ contain an abelian group of order divisible by $pq$ if and only if $(u,c)=(-1,1)$. In this case, the generator $TA^{-q^{e_0-c}}$ commutes with $\e_1$, so $M_{-1,1} \cong C_p \times (C_p \rtimes C_q)$ and $\widehat{M}_{u,c} \cong (C_p \times (C_p \rtimes C_q)) \rtimes C_d$. Recall that $M_{-1,1}$ is also isomorphic as a permutation group to $P \rtimes \langle T \rangle$, and $\widehat{M}_{-1,1}$ is isomorphic as a permutation group to $P \rtimes \langle T,B^{s/d} \rangle$. Each contains a normal subgroup of order $pq$ which is complement to $M_{-1,1}'$ or $\widehat{M}'_{-1,1}$ respectively. For $(u,c) \neq (-1,1)$, the generator $TA^{uq^{e_0-c}}$ acts on $P$ with two distinct eigenvalues as discussed above. The normal complement in $M_{u,c}$ to $M'_{u,c}$ (and likewise in $\widehat{M}_{u,c}$ to $\widehat{M}'_{u,c}$) would be a transitive subgroup of order $pq$, namely either $\langle \e_1,[\mu\e_2,TA^{uq^{e_0-1}}] \rangle$ or $\langle \e_2, [\mu\e_1,TA^{uq^{e_0-1}}] \rangle$ for some $0 \leq \mu \leq p-1$, $u \neq -1$. However, none of these groups are normalised by $P$, and so $M_{u,c}'$ does not have a normal complement in $M_{u,c}$.}
	
	\emph{For $(u,c) \neq (-1,1)$, we denote the isomorphism classes of the groups $M_{u,c}$ and $\widehat{M}_{u,c}$ by $\mathbb{F}_p^2 \rtimes_u C_{q^c}$ and $\mathbb{F}_p^2 \rtimes_u C_{dq^{c}}$ respectively.}
\end{remark}

	

\begin{proposition}
	For a fixed $d|s$ with $M= \langle \e_1,T,B^{s/d} \rangle$, we have
	\[|\Aut(M,M')|= \begin{cases}
		p(p-1) & \text{if $d = 1$},\\
		p-1 & \text{otherwise}.
	\end{cases}\]
\end{proposition}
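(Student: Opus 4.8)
The plan is to compute $\Aut(M,M')$ directly for the group $M=\langle \e_1,T,B^{s/d}\rangle$, following the same template used in the previous propositions (especially Proposition \ref{metab_holgp_isoms}). First I would record the structure of $M$ and $M'$: here $M\cap P=\mathbb{F}_p\e_1$ is the unique minimal normal subgroup that is also normal in $M$ (since $T$ acts on $\e_1$ with eigenvalue $g\neq 1$ but fixes nothing else inside the one-dimensional space it preserves), and $M'=M\cap\Aut(N)=\langle \textbf{f},B^{s/d}\rangle$, where $\textbf{f}=\e_1-\e_2$. The key structural point is that $\langle\e_1\rangle$ is the only normal subgroup of order $p$ contained in $P\cap M$, which forces any $\theta\in\Aut(M,M')$ to send $\e_1\mapsto x\e_1$ for some $1\le x\le p-1$; there is no ``swap'' case here, in contrast to the $P$-containing groups, because $M$ does not contain $\e_2$.

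Next I would parametrise a general $\theta\in\Aut(M,M')$. Writing $\theta(\e_1)=x\e_1$, the condition $\theta(M')=M'$ forces $\theta(\textbf{f})=[\,?\,,\textbf{f}]$-type images and $\theta(B^{s/d})=[\nu\textbf{f},B^{bs/d}]$ with $b$ coprime to $d$; and $\theta(T)=[\textbf{v},T^j A^{q^{e_0-1}k}]$ for suitable $j,k$. Then I would impose the three families of relations defining $M$: the commutation/eigenvalue relation between $T$ and $\e_1$ (which pins down $j=1$, $k=0$, and constrains $\textbf{v}$ to lie in a specific coset), the relation between $B^{s/d}$ and $\e_1$ (which forces $b=1$), and the relation between $T$ and $B^{s/d}$ inside $R$ together with the $\textbf{f}$-component equation $(1-a_\beta^{s/d})\textbf{v}=\nu(I-T)\textbf{f}$. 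The split is then exactly as in Proposition \ref{metab_holgp_isoms}: when $d=1$ the factor $B^{s/d}$ is trivial, there is no $\nu$-relation, and one is left with free choices of $x\in\mathbb{F}_p^\times$ and a one-parameter family for $\textbf{v}$ (namely $\textbf{v}=y\e_1$, $y\in\mathbb{F}_p$), giving $p(p-1)$; when $d>1$, the relation $(1-a_\beta^{s/d})\textbf{v}=\nu(I-T)\textbf{f}$ together with $(1-a_\beta^{s/d})\nu$ being forced rigidly ties $\textbf{v}$ and $\nu$ to each other and, crucially, kills the free $y$-parameter since $1-a_\beta^{s/d}\neq 0$, leaving only the $p-1$ choices for $x$.

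The main obstacle I anticipate is bookkeeping the $\textbf{f}$-versus-$\e_1$ coordinates carefully: since $\textbf{f}=\e_1-\e_2$ and $T$ acts diagonally with distinct eigenvalues $g$ and $1$, the vector $(I-T)\textbf{f}$ has a nonzero $\e_1$-component, and one must check that the consistency of the system $(1-a_\beta^{s/d})\textbf{v}=\nu(I-T)\textbf{f}$ with the $T$-relation on $\textbf{v}$ does not accidentally admit extra solutions or, conversely, become overdetermined and collapse $\nu$ to $0$. The cleanest way to handle this is to work in the eigenbasis $\{\e_1,\e_2\}$ throughout, expressing $\textbf{f}$ and all images in those coordinates, so that every relation becomes a pair of scalar congruences mod $p$; the count then reduces to counting solutions of a small linear system over $\mathbb{F}_p$ whose rank jumps by one precisely when $a_\beta^{s/d}\neq 1$, i.e. when $d>1$. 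Once that rank computation is pinned down, both cases of the stated formula follow immediately, and the fact that $M$ with $d=1$ is just $C_p\times(C_p\rtimes C_q)$ provides a useful sanity check against the $c=0,d=1$ computation already carried out in Proposition \ref{metab_holgp_isoms}.
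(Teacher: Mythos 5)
Your overall template is right, but there is a concrete error at the first step that propagates. You take $M'=M\cap\Aut(N)=\langle \textbf{f},B^{s/d}\rangle$. That is false here, because $\textbf{f}=\e_1-\e_2$ does not lie in $M$: this group is one of the transitive subgroups \emph{not} containing $P$, with $M\cap P=\mathbb{F}_p\e_1$, and $\mathbb{F}_p\e_1\cap\mathbb{F}_p\textbf{f}=\{0\}$. So in fact $M'=\langle B^{s/d}\rangle$, of order $d$ rather than $pd$; the description $M'=\langle \textbf{f},\dots\rangle$ is only valid for the groups of Proposition \ref{metab_holgp_isoms}, which contain all of $P$.

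This matters for the count, not just for bookkeeping. Because $\textbf{f}\notin M$, the image $\theta(B^{s/d})$ cannot be $[\nu\textbf{f},B^{bs/d}]$ with $\nu\neq 0$; it must be $B^{as/d}$ with no translation part at all. Your relation $(1-a_{\beta}^{s/d})\textbf{v}=\nu(I-T)\textbf{f}$ then collapses to $(1-a_{\beta}^{s/d})\textbf{v}=0$, which is what actually kills the free parameter when $d>1$. As written, your argument leaves $\nu$ alive as a free parameter merely ``tied to'' $\textbf{v}$: since $(I-T)\textbf{f}=(1-g)\e_1\neq 0$, the system $(1-a_{\beta}^{s/d})\textbf{v}=\nu(1-g)\e_1$ has a one-parameter family of solutions $(\nu,\textbf{v})$, so your count for $d>1$ would come out as $p(p-1)$ rather than $p-1$. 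There is no analogue here of the extra relation $(1-a_{\alpha}^{q^{e_0-c}})\nu=(1-a_{\beta}^{s/d})\mu$ that pinned $\nu$ down in Proposition \ref{metab_holgp_isoms}, because $M$ has no $A$-generator. Once $M'$ is identified correctly the proof is short and matches the paper's: $\theta(\e_1)=x\e_1$, $\theta(B^{s/d})=B^{as/d}$, $\theta(T)=[\mu\e_1,T^b]$, and the relations $T\e_1T^{-1}=g\e_1$, $B^{s/d}\e_1B^{-s/d}=a_{\beta}^{s/d}\e_1$ and $TB^{s/d}=B^{s/d}T$ force $a=b=1$ and, for $d>1$, $\mu=0$.
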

\begin{proof}
	Clearly $M'=\langle B^{s/d} \rangle$. For $\theta \in \Aut(M,M')$, we must have
	$\theta(\e_1)=x\e_1$ for some $1 \leq x \leq p-1$, $\theta(B^{s/d})=B^{as/d}$ for some $a$, and $\theta(T)=[\mu\e_1,T^b]$ for some $\mu$ and $b$. Suppose first that $d\neq 1$, then the relations $B^{s/d}\e_1B^{-s/d}=a_{\beta}^{s/d}\e_1$, $T\e_1T^{-1}=g\e_1$ and $TB^{s/d}=B^{s/d}T$ force $a=b=1$ and $\mu=0$, giving $|\Aut(M,M')|=p-1$. If $d=1$, then there is no restriction on $\mu$, giving $|\Aut(M,M')|=p(p-1)$.
\end{proof}

\begin{proposition}
	For the groups in row four of Table \ref{metabelian-trans-subgroups} which are not isomorphic to $\langle \e_1,T,B^{s/d} \rangle$, we have:
	\[\begin{array}{|c|c|c|}
		\hline
		\text{Parameters}	&	M	&	|\Aut(M,M')|\\
		\hline
		(u,c)=(-1,1)		&	\left\langle \e_1,TA^{-q^{e_0-1}},B^{s/d}\right\rangle	&	(p-1)(q-1)\\
		\hline
		c>1					&	\left\langle \e_1,TA^{uq^{e_0-c}},B^{s/d}\right\rangle	&	p-1\\
		\hline
	\end{array}\]
\end{proposition}
\begin{proof}
	We have $M'=\langle A^{uq^{e_0-(c-1)}},B^{s/d} \rangle$. Then $\theta(\e_1)=x\e_1$ and $\theta(B^{s/d})=B^{s/d}$ by similar logic to the previous proof. Further, we have
	\[\theta(TA^{uq^{e_0-c}})=\left[\mu\e_1,T^aA^{auq^{e_0-c}}\right] \text{ with } \theta(A^{uq^{e_0-(c-1)}})=A^{auq^{e_0-(c-1)}}.\]
	Firstly, if $c>1$, then given $TA^{uq^{e_0-c}}\e_1=ga_{\alpha}^{uq^{e_0-c}}\e_1TA^{uq^{e_0-c}}$, we must have $\mu=0$ and $a=1$. There is no restriction on $x$, and so $|\Aut(M,M')|=p-1$. If $c=1$ but $u\neq -1$, then $M$ is isomorphic to $\langle \e_1,T,B^{s/d} \rangle$. So suppose $(c,u)=(1,-1)$, then $M$ contains an abelian subgroup of order $pq$ coprime to its index of $2$. Then we clearly have $|\Aut(M,M')|=(p-1)(q-1)$, corresponding to $\mu=0$, $1 \leq x \leq p-1$ and $1 \leq a \leq q-1$.
\end{proof}

\begin{proposition}
	For fixed $c\geq 1$ and $d \mid s$ with $M=\left\langle \e_1,T,A^{q^{e_0-c}},B^{s/d}\right\rangle$, we have $|\Aut(M,M')|=(p-1)(q-1)$.
\end{proposition}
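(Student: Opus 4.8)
The plan is to follow the method of the preceding propositions, the new feature being that the hypothesis $c\ge 1$ will force the vector part of $\theta(T)$ to vanish.

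First I would identify the stabiliser explicitly. Every element of $M$ has the shape $[\textbf{v},T^j(A^{q^{e_0-c}})^k(B^{s/d})^\ell]$ with $\textbf{v}\in\mathbb F_p\e_1$, and such an element lies in $\Aut(N)=\mathbb F_p\textbf{f}\rtimes\langle A,B\rangle$ precisely when $\textbf{v}\in\mathbb F_p\e_1\cap\mathbb F_p\textbf{f}=\{0\}$ and its matrix part is scalar, i.e. $q\mid j$; hence $M'=M\cap\Aut(N)=\langle A^{q^{e_0-c}},B^{s/d}\rangle\cong C_{q^c}\times C_d$, abelian with factors of coprime order. Since $d\mid s$ is coprime to $pq$, we have $|M|=pq^{c+1}d$ with $p$ dividing it exactly once, so the normal subgroup $\langle\e_1\rangle$ is the unique Sylow $p$-subgroup of $M$ and is therefore characteristic; any $\theta\in\Aut(M,M')$ thus satisfies $\theta(\e_1)=x\e_1$ for some $x\in\mathbb F_p^\times$, contributing a factor $p-1$.

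Next I would pin down $\theta$ on $M'$ and on $T$. As $\theta(M')=M'$, the restriction $\theta|_{M'}$ is an automorphism of $M'\cong C_{q^c}\times C_d$, so $\theta(A^{q^{e_0-c}})=(A^{q^{e_0-c}})^{a'}$ with $q\nmid a'$ and $\theta(B^{s/d})=(B^{s/d})^{b}$ with $\gcd(b,d)=1$. Applying $\theta$ to $A^{q^{e_0-c}}\e_1(A^{q^{e_0-c}})^{-1}=a_\alpha^{q^{e_0-c}}\e_1$ and $B^{s/d}\e_1(B^{s/d})^{-1}=a_\beta^{s/d}\e_1$ and comparing the scalars by which the matrix parts act on $\e_1$ (using $\mathrm{ord}_p(a_\alpha)=q^{e_0}$ and $\mathrm{ord}_p(a_\beta)=s$) forces $a'\equiv1\pmod{q^c}$ and $b\equiv1\pmod d$, so $\theta$ is the identity on $M'$. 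Writing $\theta(T)=[\textbf{v},T^a(A^{q^{e_0-c}})^k(B^{s/d})^\ell]$, the crucial step — and the one I expect to be the only genuine point — is that $\theta(T)$ must commute with $\theta(A^{q^{e_0-c}})=A^{q^{e_0-c}}$; since $c\ge1$ gives $a_\alpha^{q^{e_0-c}}\ne1$ in $\mathbb F_p$, that commutation relation collapses to $(1-a_\alpha^{q^{e_0-c}})\textbf{v}=0$, hence $\textbf{v}=0$. With $\textbf{v}=0$, demanding $\theta(T)^q=1$ and that the images $\theta(\e_1),\theta(T),\theta(A^{q^{e_0-c}}),\theta(B^{s/d})$ generate $M$ forces $d\mid\ell$, $q^{c-1}\mid k$ (say $k=q^{c-1}k'$) and $q\nmid a$, while applying $\theta$ to $T\e_1T^{-1}=g\e_1$ together with $g=a_\alpha^{q^{e_0-1}}$ yields $a+k'\equiv1\pmod q$; so $\theta(T)=T^a(A^{q^{e_0-c}})^{q^{c-1}(1-a)}$ is determined by $a\in\{1,\dots,q-1\}$.

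Conversely, for each pair $(x,a)\in\mathbb F_p^\times\times\{1,\dots,q-1\}$ the assignment $\e_1\mapsto x\e_1$, $T\mapsto T^a(A^{q^{e_0-c}})^{q^{c-1}(1-a)}$, $A^{q^{e_0-c}}\mapsto A^{q^{e_0-c}}$, $B^{s/d}\mapsto B^{s/d}$ respects every defining relation of the split extension $M=\langle\e_1\rangle\rtimes\langle T,A^{q^{e_0-c}},B^{s/d}\rangle$: the matrix parts in play are all diagonal, so the commutativity relations are automatic, and the conjugation scalars on $\e_1$ work out by the congruence just obtained. Since the images generate $M$ this defines an element of $\Aut(M,M')$, so $|\Aut(M,M')|=(p-1)(q-1)$. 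I expect no real difficulty beyond keeping the exponent bookkeeping straight; the vanishing of $\textbf{v}$ is precisely why this row of Table \ref{metabelian-trans-subgroups} contributes $(p-1)(q-1)$ rather than a value carrying a factor of $p$.
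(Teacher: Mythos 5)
Your proof is correct and takes essentially the same route as the paper's: show $\theta$ fixes $M'$ pointwise and scales $\e_1$, use the commutation of $\theta(T)$ with $A^{q^{e_0-c}}$ (which needs $c\geq 1$) to force the vector part of $\theta(T)$ to vanish, and then use $T\e_1T^{-1}=g\e_1$ to tie the exponents of $T$ and $A^{q^{e_0-1}}$ together, yielding $(p-1)(q-1)$. You simply supply more bookkeeping than the paper does (the order-$q$ analysis pinning down the shape of $\theta(T)$ and the converse check that each pair $(x,a)$ really defines an automorphism), but the argument is the same.
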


\begin{proof}
 We have $M'=\langle A^{q^{e_0-c}},B^{s/d} \rangle$. Let $\theta \in \Aut(M,M')$, then as $\e_1$ is an eigenvector for each element of $M'$, we must have that $\theta|_{M'}=\text{id}$. We then have $\theta(\e_1)=x\e_1$ and
 \[\theta(T)=\left[\mu\e_1,T^aA^{bq^{e_0-1}}\right],\]
 for some $1 \leq x \leq p-1$, $0\leq \mu \leq p-1$, $1 \leq a \leq q-1$, and $0 \leq b \leq q-1$. Given $c\geq 1$, as $T$ and $A^{e^{e_0-c}}$ commute, we must have $\mu=0$. Next, as $T\e_1=g\e_1T$, we have $a+b=1$. There are no further restrictions, giving a total of $(p-1)(q-1)$ elements in $\Aut(M,M')$.
\end{proof}

\begin{table}
	\setlength{\extrarowheight}{2mm}
	\centering
	\begin{tabular}{|c|c|}
		\hline
		Group	&	Structure\\
		\hline
		$P \rtimes \left \langle T, A^{q^{e_0-c}}, B^{s/d} \right \rangle$ & $(N \rtimes (C_p \rtimes C_{q^{c}}))\rtimes C_d$\\[5pt]
		\hline
		$P \rtimes \left \langle TA^{uq^{e_0-c}}, B^{s/d} \right \rangle$, for $(c,u) \neq (1,-1)$ & $\mathbb{F}_p^2 \rtimes_u C_{dq^{c}}$\\[5pt]
		\hline
		$P \rtimes \left \langle T, B^{s/d} \right \rangle \cong P \rtimes \left\langle TA^{-q^{e_0-1}} \right \rangle$ & $((C_p \rtimes C_q)\times C_p)\rtimes C_d$\\[5pt]
		\hline
		$\left\langle \e_1,T,B^{s/d} \right\rangle$ & $C_p \rtimes C_{dq}$\\[5pt]
		\hline
		$\left\langle \e_1, TA^{uq^{e_0-c}}, B^{s/d} \right\rangle$, for $(c,u) \neq (1,-1)$ & $C_p \rtimes C_{dq^{c}}$\\[5pt]
		\hline
		$\left\langle \e_1, TA^{-q^{e_0-1}}, B^{s/d} \right\rangle$ & $(C_p \rtimes C_d) \times C_q$\\[5pt]
		\hline
		$\left\langle \e_1,T,A^{q^{e_0-c}}, B^{s/d} \right\rangle$ & $(C_p \rtimes C_{dq^c}) \times C_q$\\[5pt]
		\hline
		$\left\langle \e_1,T \right\rangle$ & $C_p \rtimes C_q$\\[5pt]
		\hline
		$\left\langle \e_1, TA^{uq^{e_0-c}} \right\rangle$, for $(c,u) \neq (1,-1)$ & $C_p \rtimes C_{q^c}$\\[5pt]
		\hline
		$\left\langle \e_1, TA^{-q^{e_0-1}} \right\rangle$ & $C_{pq}$\\[5pt]
		\hline
		$\left\langle \e_1,T,A^{q^{e_0-c}} \right\rangle$ & $(C_p \rtimes C_{q^c}) \times C_q$\\[5pt]
		\hline
	\end{tabular}
	
	\medskip
	
	\caption{Isomorphism types of transitive groups for $N$ metabelian.}
	\label{metabelian-structure}
\end{table}

Most notably there is no case where the groups are isomorphic as abstract groups but not isomorphic as permutation groups.

Denote by $\sigma_0(s)$ the number of divisors of $s$.

\begin{table}
	\setlength{\extrarowheight}{2mm}
	\begin{center}
		\resizebox{\columnwidth}{!}{
		\begin{tabular}{|c|c|c|c|}
			\hline
			Structure	&	$\#$groups	&	$|\Aut(M,M')|$	&	$\#$HGS \\
			\hline
			$(N \rtimes (C_p \rtimes C_{q^{c}}))\rtimes C_d$ $c>0, d|s$ 	&	$1$	&	$2p(p-1)$	&	$2$ \\[5pt]
			\hline
			$(C_p \rtimes C_q) \times C_p$	&	$2$	&	$p(p-1)$	&	$2$ \\
			$\mathbb{F}_p^2 \rtimes_u C_q$, $1 \leq u \leq (q-3)/2 $	&	$2$	&	$p^2(p-1)$	&	$2p$\\
			$\mathbb{F}_p^2 \rtimes_{(q-1)/2}C_q$	&	$1$	&	$2p^2(p-1)$	&	$2p$\\
			$\mathbb{F}_p^2 \rtimes_u C_{q^c}$, $c>1, 1\leq u \leq (q-1)/2$	&	$2q^{c-1}$	&	$p(p-1)$		&	$2q^{c-1}$\\
			[5pt]
			\hline
			$(C_p \times (C_p \rtimes C_q)) \rtimes C_d$, $d|s, d > 1$	&	$2$	&	$p(p-1)$	&	$2$\\
			$\mathbb{F}_p^2 \rtimes_u C_{dq}$, $1\leq u \leq (q-3)/2,d|s, d>1$	&	$2$	&	$p(p-1)$	&	$2$\\
			$\mathbb{F}_p^2 \rtimes_{(q-1)/2}C_{dq}$, $d|s, d>1$	& $1$ &	$2p(p-1)$	&	$2$ \\
			$\mathbb{F}_p^2 \rtimes_u C_{dq^c}$, $c>1, 1\leq u \leq (q-1)/2,d|s, d>1$	& $2q^{c-1}$ &	$p(p-1)$	& $2q^{c-1}$\\[5pt]
			\hline
			$C_p \rtimes C_{dq^c}$, $c>0,d|s, (c,d) \neq (1,1)$	&	$2p\varphi(q^{c})$	&	$p-1$	&	$2\varphi(q^c)$	\\[5pt]
			\hline
			$C_p \rtimes C_q$	&	$2p(q-1)+2$	&	$p(p-1)$	&	$2p(q-2)+2$	\\[5pt]
			\hline
			$(C_p \rtimes C_{dq^c}) \times C_q$, $c\geq0, d|s$	&		$2p$	&	$(p-1)(q-1)$	&	$2(q-1)$	\\[5pt]
			\hline
		\end{tabular}}
		
		\medskip
		
		\caption{\# HGS Transitive subgroups for $N$ metabelian.}
		\label{metabelian-trans-HGS}
	\end{center}
\end{table}

\begin{theorem}
	In total, there are
	\[\sigma_0(s)\left[3e_0+3+\frac{1}{2}(q-3)+\frac{1}{2}(e_0-1)(q-1)\right]\]
	isomorphism types of permutation groups $G$ of degree $pq$ (as listed in Table \ref{metabelian-structure}) which are realised by Hopf-Galois structures of non-abelian type $C_p \rtimes C_q$ (the numbers of which are displayed in Table \ref{metabelian-trans-HGS}). These include the two regular groups, i.e. the cyclic and non-abelian groups of order $pq$ (for which the corresponding Galois extensions have $2(q-1)$ and $2p(q-2)+2)$ Hopf-Galois structures of non-abelian type respectively). Of the corresponding field extensions for these groups, $\sigma_0(s)(1+\frac{1}{2}(q-3)+\frac{1}{2}(e_0-1)(q-1))$ fail to be almost classically Galois (these are the groups given by all but one group in each of the second and third left-hand cells in Table \ref{metabelian-trans-HGS}, along with $C_p \rtimes C_{dq^{c}}$ for $c>1$). In the remaining $\sigma_0(s)(3e_0+2)$ cases, the extensions are almost classically Galois.
\end{theorem}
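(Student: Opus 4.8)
The plan is to treat this as a bookkeeping theorem: the substantive work is already encoded in Table \ref{metabelian-trans-HGS}, which records, for each isomorphism class of transitive subgroup $M\le\Hol(N)$, its isomorphism type, the number of transitive subgroups of $\Hol(N)$ realising it, the value of $|\Aut(M,M')|$, and hence, via Lemma \ref{Byott_num_HGS}, the number of Hopf-Galois structures of metabelian type that $M$ carries. So the theorem splits into three tasks: count the isomorphism classes; locate the two regular groups together with their Hopf-Galois totals; and decide the almost classically Galois dichotomy.

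For the first task I would sum over the rows of Table \ref{metabelian-trans-HGS}. In each row the free parameters are $c$ (in $\{0,\dots,e_0\}$ or $\{1,\dots,e_0\}$), $u$ (running over the $\tfrac12(q-1)$, resp.\ $\tfrac12(q-3)$, classes cut out by the isomorphism relations established in Proposition \ref{metab_isoms} and the proposition after it), and $d\mid s$, the last always contributing a clean factor $\sigma_0(s)$ (or $\sigma_0(s)-1$ when the constraint $d>1$ is imposed). The isolated ``$-1$''s, present precisely to delete overlaps between rows (for instance the constraint $(c,d)\ne(1,1)$ in the $C_p\rtimes C_{dq^c}$ row, and the $d=1$ or $(u,c)=(-1,1)$ specialisations that coincide with rows listed separately), cancel against matching $+1$'s, so the grand total collapses to $\sigma_0(s)$ times a polynomial in $e_0$ and $q$; checking that this polynomial is $3e_0+3+\tfrac12(q-3)+\tfrac12(e_0-1)(q-1)$ is a short computation. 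By the remark immediately preceding the theorem, no two of these groups are abstractly isomorphic without being permutation-isomorphic, so this is at the same time the count of isomorphism types of permutation groups. The two regular groups are read straight off: $C_p\rtimes C_q$ occupies a row of its own, and $C_{pq}$ is the $(c,d)=(0,1)$ member of the $(C_p\rtimes C_{dq^c})\times C_q$ row, their Hopf-Galois totals $2(q-1)$ and $2p(q-2)+2$ being the corresponding entries of the last column.

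The delicate part, which I expect to be the main obstacle, is the almost classically Galois dichotomy: the summation above is mechanical, but this requires a genuine check for each family. I would use that $L/K$ realising $M$ is almost classically Galois exactly when $M'=M\cap\Aut(N)$ has a normal complement in $M$, equivalently when $M$ contains a normal subgroup of order $pq$ meeting $M'$ trivially. For the rows that visibly contain a copy of the translation subgroup $N$ --- the groups $P\rtimes\langle T,A^{q^{e_0-c}},B^{s/d}\rangle$, and the groups $\langle\e_1,\dots\rangle$ containing $\langle\e_1,T\rangle$ or $\langle\e_1,TA^{-q^{e_0-1}}\rangle$ --- one checks that this regular subgroup is normalised by each remaining generator and meets $\Aut(N)$ trivially, so these extensions are almost classically Galois. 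For the groups $\mathbb{F}_p^2\rtimes_u C_{dq^c}$ with $(u,c)\ne(-1,1)$, the remark following the relevant proposition shows that the only candidate regular normal subgroup of order $pq$ fails to be normalised by the Sylow $p$-subgroup $P$, so those are not almost classically Galois; and for $C_p\rtimes C_{dq^c}$ one must identify the unique normal subgroup of order $pq$ (the preimage of the order-$q$ subgroup of the cyclic complement) and settle for which $c$ it meets $M'$ trivially. Tallying the failing rows and again collapsing the $d\mid s$ sums to $\sigma_0(s)$ then gives the stated count of non-almost-classically-Galois extensions, leaving $\sigma_0(s)(3e_0+2)$ that are almost classically Galois.
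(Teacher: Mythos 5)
Your proposal is correct and takes essentially the same route as the paper: the paper's proof likewise reads the enumeration and the two regular groups straight off Table \ref{metabelian-trans-HGS} and settles the almost classically Galois dichotomy by locating (or ruling out) a regular normal subgroup of order $pq$ in each family. If anything, your case-by-case treatment of the $C_p\rtimes C_{dq^c}$ and $\left\langle \e_1,TA^{-q^{e_0-1}},B^{s/d}\right\rangle$ families is more careful than the paper's one-line ``cannot contain $T$'' argument, and carrying it out would also expose the small mismatch in the theorem between the displayed count of non--almost-classically-Galois extensions and the parenthetical inclusion of $C_p\rtimes C_{dq^c}$ for $c>1$.
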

\begin{proof}
	Everything except the statements about almost classically Galois extensions follows from Table \ref{metabelian-trans-HGS}. Let $J$ be a group of order $pq$ in either $P\rtimes\left\langle TA^{uq^{e_0-c}},B^{s/d} \right\rangle$ or $\left\langle \e_1,TA^{uq^{e_0-c}},B^{s/d} \right\rangle$ for $(u,c)\neq (-1,1)$. It is clear that $J$ cannot contain $T$, and so none of these groups contain a regular normal subgroup. Therefore neither families of groups correspond to almost-classically Galois extensions. For the rest of the groups in the table, it is clear that they contain $\langle \e_1,T \rangle \cong N$, and so correspond to almost-classically Galois extensions.
\end{proof}

\begin{remark}\label{metab_BML}
	\emph{Note that specialising to $p=2q+1$, we obtain $2(3+3+\frac{1}{2}(q-3))=q+9$ isomorphism types, with $2(1+\frac{1}{2}(q-3)+\frac{1}{2}(1-1)(q-1))=q-1$ non-almost classically Galois extensions and $2(3+2)=10$ almost classically Galois extensions. This retrieves the result of Theorem 3.2 of \cite{BML21}. We also note that the result for $(C_p \rtimes C_q) \times C_p$ where $|\Aut(M,M')|=p(p-1)$ corrects an error in Table $5$ in \cite{BML21}.}
\end{remark}

\begin{theorem}
	There are $\sigma_0(s)(2e_0+1)$ permutation groups $G$ of degree $pq$ which are realised by Hopf-Galois structures of cyclic and non-abelian types. These are shown in Table \ref{both-types}. In all but $\sigma_0(s)(e_0-1)$ of these cases (corresponding to $C_p \rtimes C_{dq^{c}}$ for $c>1$), the corresponding field extensions are almost classically Galois.
\end{theorem}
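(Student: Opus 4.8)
The plan is to intersect the two classifications already established. From Theorem~\ref{cyclic_HGS_total} together with the analysis preceding it (in particular Lemma~\ref{cyclic_isoms}) we have the complete list of permutation groups $(G,G')$ realised by a cyclic-type Hopf-Galois structure, and from Table~\ref{metabelian-structure} with the non-abelian enumeration theorem we have the complete list realised by a non-abelian-type one; a permutation group is realised by both types exactly when it occurs in both lists. This comparison is tractable because, by Lemma~\ref{cyclic_isoms} on one side and the remark after Table~\ref{metabelian-structure} on the other, within each classification abstractly isomorphic transitive subgroups are already isomorphic as permutation groups, so it suffices to compare abstract isomorphism types provided one also checks in each matched pair that the stabiliser $G'$ has the right order and, in the ambiguous cases, sits the same way in $G$.

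First I would discard the non-abelian-type groups of order divisible by $p^{2}$ (the three families containing $P\cong C_p\times C_p$ at the top of Table~\ref{metabelian-structure}), since every cyclic-type group has cyclic Sylow $p$-subgroup of order exactly $p$. It is then convenient to split the remaining groups by whether they contain a normal subgroup isomorphic to $C_{pq}$. On the cyclic side the groups of type~(\ref{N}) all contain $N\cong C_{pq}$ normally, whereas those of type~(\ref{J_{t,c}}) are $C_p\rtimes C_{q^{c}d}$ with faithful action and have no abelian subgroup of order $pq$; on the non-abelian side, exactly the groups with a central $C_q$ direct factor — namely $(C_p\rtimes C_d)\times C_q$, $(C_p\rtimes C_{dq^{c}})\times C_q$ and $C_{pq}$ — contain a normal $C_{pq}$, while $C_p\rtimes C_{dq^{c}}$, $C_p\rtimes C_q$ and $C_p\rtimes C_{q^{c}}$ do not. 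The overlap splits along this line.

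For the groups with no normal $C_{pq}$: a type-(\ref{J_{t,c}}) group $J_{t,c}\rtimes A'$ is $C_p\rtimes C_{q^{c}d}$ with $1\le c\le e_0$, $d\mid s$ and $|G'|=q^{c-1}d$, which matches the non-abelian family $C_p\rtimes C_{dq^{c}}$ (together with the regular non-abelian group $C_p\rtimes C_q$ at $c=d=1$), in which $G'$ is again the unique subgroup of order $q^{c-1}d$ of the cyclic complement; this contributes $e_0\sigma_0(s)$ permutation groups. For the groups with a normal $C_{pq}$: writing a type-(\ref{N}) group as $N\rtimes A$ with $A\le\Aut(N)$, a non-abelian match forces $A$ to act trivially on $\langle\tau\rangle$ — otherwise $G$ either acts nontrivially on its Sylow $q$-subgroup $\langle\tau\rangle$, or has a generator acting nontrivially on $\langle\sigma\rangle$ and $\langle\tau\rangle$ simultaneously (a type-(ii) or (iii) subgroup from Proposition~\ref{cyclic_subgroups}), and neither happens in $(C_p\rtimes C_d)\times C_q$, $(C_p\rtimes C_{dq^{c}})\times C_q$ or $C_{pq}$, where the analogue of $\langle\tau\rangle$ is centralised by the whole group. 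When $A$ acts trivially on $\langle\tau\rangle$ one gets $G\cong(C_p\rtimes C_{q^{c}d_1})\times C_q$ with $0\le c\le e_0$ and $d_1\mid s$, that is $(e_0+1)\sigma_0(s)$ permutation groups (including $C_{pq}$ at $c=0$, $d_1=1$), matched against $(C_p\rtimes C_{d_1})\times C_q$ for $c=0$, against $(C_p\rtimes C_{d_1q^{c}})\times C_q$ for $c\ge1$, or against $C_{pq}$. Summing the two blocks gives $(2e_0+1)\sigma_0(s)$, as tabulated in Table~\ref{both-types}.

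The almost classically Galois statement I would then read off from the two enumeration theorems already proved: Theorem~\ref{cyclic_HGS_total} and the non-abelian enumeration theorem both single out the groups $C_p\rtimes C_{dq^{c}}$ with $c>1$ as precisely those failing to be almost classically Galois, every other group in the overlap admitting a regular normal complement to $G'$. There are $(e_0-1)\sigma_0(s)$ exceptional groups, hence $(e_0+2)\sigma_0(s)$ almost classically Galois ones, consistent with $(2e_0+1)-(e_0-1)=e_0+2$. I expect the main obstacle to be the ``no further matches'' step: ruling out any isomorphism — permutation or even abstract — between a type-(\ref{N}) group whose $A$ moves $\langle\tau\rangle$ and some non-abelian-type group. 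The cleanest route is to fix a Sylow $q$-subgroup $Q$ and compare the invariants $|Q|$, the isomorphism type of $Q$, and the image of $N_G(Q)/C_G(Q)$ in $\Aut(Q)$, alongside $|G'|$; these together separate the cases ``$q$-part central'', ``$q$-part acted on nontrivially'', and ``$q$-part acts nontrivially on the $p$-part'', and once tabulated across both lists the matching is forced.
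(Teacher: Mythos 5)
Your proposal is correct and follows essentially the same route as the paper, which states this theorem without a separate proof and simply reads the overlap off the two classifications already established (Theorem~\ref{cyclic_HGS_total} and Tables~\ref{metabelian-structure}--\ref{metabelian-trans-HGS}). Your counts $(e_0+1)\sigma_0(s)+e_0\sigma_0(s)=(2e_0+1)\sigma_0(s)$ and the identification of the $(e_0-1)\sigma_0(s)$ non--almost-classically-Galois cases agree with Table~\ref{both-types}, and your $N_G(Q)/C_G(Q)$ invariant is a sound way to make precise the ``no further matches'' step that the paper leaves implicit.
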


\begin{remark}
	\emph{For every Hopf-Galois structure of non-abelian type, there is an opposite Hopf-Galois structure of the same type. Hence it makes sense that the number of Hopf Galois structures in each row of Table \ref{metabelian-trans-HGS} is even.}
\end{remark}

\begin{remark}\label{both_BML}
	\emph{This again realises the result of \cite{BML21}, that there are $2(2+1)=6$ permutation groups of degree $pq$ with $p=2q+1$ which are realised by Hopf-Galois structures of both cyclic and metabelian type (Theorem 3.3 of \cite{BML21}). In this case, $2(1-1)=0$ extensions fail to be almost classically Galois.}
\end{remark}

\begin{table}
	\begin{center}
		\begin{tabular}{|c|c|c|}
			\hline
			Structure	&	$\#$ cyclic type HGS	&	$\#$ non-abelian type HGS\\
			\hline
			$(C_p \rtimes C_{dq^{c}}) \times C_q$	&	$1$	&	$2(q-1)$	\\
			\hline
			$C_p \rtimes C_{dq^{c}}$, $(c,d) \neq (1,1),(0,d)$	&	$1$	&	$2\varphi(q^{c})$	\\
			\hline
			$C_p \rtimes C_q$	&	$p$	&	$2p(q-2)+2$\\
			\hline
		\end{tabular}
		\medskip
		\caption{Groups admitting Hopf-Galois structures of both types.}
		\label{both-types}
	\end{center}
\end{table}


\section*{Data availability}
The research data supporting this publication are provided within this paper.

\section*{Acknowledgements}
The author is supported by the Engineering and Physical Sciences Doctoral Training Partnership research grant EP/T518049/1 (EPSRC DTP).

\bibliography{MyBib}

\newcommand{\etalchar}[1]{$^{#1}$}
\providecommand{\bysame}{\leavevmode\hbox to3em{\hrulefill}\thinspace}
\providecommand{\MR}{\relax\ifhmode\unskip\space\fi MR }
\providecommand{\MRhref}[2]{%
  \href{http://www.ams.org/mathscinet-getitem?mr=#1}{#2}
}
\providecommand{\href}[2]{#2}
\begin{thebibliography}{CGK{\etalchar{+}}21}

\bibitem[AB20]{AB20}
Ali~A. Alabdali and Nigel~P. Byott, \emph{Hopf-{G}alois structures of
  squarefree degree}, J. Algebra \textbf{559} (2020), 58--86. \MR{4093704}

\bibitem[Bac16]{Bac16}
David Bachiller, \emph{Counterexample to a conjecture about braces}, Journal of
  Algebra \textbf{453} (2016), 160--176.

\bibitem[BML22]{BML21}
Nigel~P. Byott and Isabel Martin-Lyons, \emph{Hopf-{G}alois structures on
  non-normal extensions of degree related to {S}ophie {G}ermain primes}, J.
  Pure Appl. Algebra \textbf{226} (2022), no.~3, Paper No. 106869. \MR{4295182}

\bibitem[Byo96]{Byo96}
N.~P. Byott, \emph{Uniqueness of {H}opf {G}alois structure for separable field
  extensions}, Comm. Algebra \textbf{24} (1996), no.~10, 3217--3228.
  \MR{1402555}

\bibitem[Byo04]{Byo04}
Nigel~P. Byott, \emph{Hopf-{G}alois structures on {G}alois field extensions of
  degree {$pq$}}, J. Pure Appl. Algebra \textbf{188} (2004), no.~1-3, 45--57.
  \MR{2030805}

\bibitem[CGK{\etalchar{+}}21]{Chi+20}
Lindsay~N. Childs, Cornelius Greither, Kevin~P. Keating, Alan Koch, Timothy
  Kohl, Paul~J. Truman, and Robert~G. Underwood, \emph{Hopf algebras and
  {G}alois module theory}, Mathematical Surveys and Monographs, vol. 260,
  American Mathematical Society, Providence, RI, [2021] \copyright 2021.
  \MR{4390798}

\bibitem[Chi00]{Chi00}
Lindsay~N. Childs, \emph{Taming wild extensions: {H}opf algebras and local
  {G}alois module theory}, Mathematical Surveys and Monographs, vol.~80,
  American Mathematical Society, Providence, RI, 2000. \MR{1767499}

\bibitem[CRV16]{CRV16}
Teresa Crespo, Anna Rio, and Montserrat Vela, \emph{On the {G}alois
  correspondence theorem in separable {H}opf {G}alois theory}, Publ. Mat.
  \textbf{60} (2016), no.~1, 221--234. \MR{3447739}

\bibitem[CS69]{CS69}
Stephen~U. Chase and Moss~E. Sweedler, \emph{Hopf algebras and {G}alois
  theory}, Lecture Notes in Mathematics, Vol. 97, Springer-Verlag, Berlin-New
  York, 1969. \MR{0260724}

\bibitem[CS20]{CS20}
Teresa Crespo and Marta Salguero, \emph{Computation of {H}opf {G}alois
  structures on low degree separable extensions and classification of those for
  degrees {$p^2$} and {$2p$}}, Publ. Mat. \textbf{64} (2020), no.~1, 121--141.
  \MR{4047559}

\bibitem[GM23]{GM23}
Daniel Gil-Muñoz, \emph{The ring of integers of {H}opf-{G}alois degree p
  extensions of p-adic fields with dihedral normal closure}, Journal of Number
  Theory \textbf{245} (2023), 65--118.

\bibitem[GP87]{GP87}
Cornelius Greither and Bodo Pareigis, \emph{Hopf {G}alois theory for separable
  field extensions}, J. Algebra \textbf{106} (1987), no.~1, 239--258.
  \MR{878476}

\bibitem[Koh98]{Kohl98}
Timothy Kohl, \emph{Classification of the {H}opf {G}alois structures on prime
  power radical extensions}, J. Algebra \textbf{207} (1998), no.~2, 525--546.
  \MR{1644203}

\bibitem[MLT24]{MLT23}
Isabel Martin-Lyons and Paul~J. Truman, \emph{Skew bracoids}, J. Algebra
  \textbf{638} (2024), 751--787. \MR{4658450}

\bibitem[SV18]{SV17}
Agata Smoktunowicz and Leandro Vendramin, \emph{On skew braces (with an
  appendix by {N}. {B}yott and {L}. {V}endramin)}, J. Comb. Algebra \textbf{2}
  (2018), no.~1, 47--86. \MR{3763907}

\end{thebibliography}

\end{document}